\documentclass[11pt,final]{amsart}
\usepackage[utf8]{inputenc}
\usepackage[T1]{fontenc}
\usepackage[letterpaper,centering]{geometry}
\usepackage{lmodern}
\usepackage{eucal}
\usepackage{mathrsfs}
\usepackage{amsmath,amssymb,amsfonts,amscd}
\usepackage{microtype,xcolor,enumitem,booktabs,needspace}
\definecolor{darkblue}{rgb}{0,0,0.3}
\definecolor{darkgreen}{rgb}{0,0.4,0}
\usepackage[colorlinks=true,linkcolor=darkblue,citecolor=darkgreen,
  urlcolor=darkblue,unicode,pdfborder={0 0 0}]{hyperref}
\setlist[enumerate]{label={\upshape(\arabic*)},topsep=.7ex,leftmargin=*}
\setlist[itemize]{leftmargin=*}
\theoremstyle{plain}
\newtheorem{thm}{Theorem}[section]
\newtheorem{lem}[thm]{Lemma}
\newtheorem{prop}[thm]{Proposition}

\newtheorem*{campanaconj}{Conjecture}
\theoremstyle{definition}

\newtheorem*{terminology}{Terminology}
\newtheorem*{ack}{Acknowledgements}
\numberwithin{equation}{section}
\newcommand{\PP}{\mathbf P}
\newcommand{\Aff}{\mathbf A}
\newcommand{\A}{\mathbf A}
\newcommand{\OO}{\mathcal O}
\newcommand{\FF}{\mathbf F}
\newcommand{\ZZ}{\mathbf Z}

\newcommand{\kbar}{\overline{k}}
\newcommand{\sX}{\mathscr X}
\newcommand{\sY}{\mathscr Y}
\newcommand{\sC}{\mathscr C}
\newcommand{\sD}{\mathscr D}
\newcommand{\sE}{\mathscr E}

\newcommand{\sQ}{\mathscr Q}
\newcommand{\sR}{\mathscr R}
\newcommand{\sZ}{\mathscr Z}
\newcommand{\CC}{\mathcal C}
\newcommand{\Gm}{\mathbf G_{\mathrm m}}
\newcommand{\red}{\mathrm{red}}

\newcommand{\et}{\mathrm{\acute et}}
\DeclareMathOperator{\Br}{Br}
\DeclareMathOperator{\Pic}{Pic}
\DeclareMathOperator{\Gal}{Gal}
\DeclareMathOperator{\Aut}{Aut}
\DeclareMathOperator{\Spec}{Spec}
\DeclareMathOperator{\Supp}{Supp}
\DeclareMathOperator{\inv}{inv}
\DeclareMathOperator{\ord}{ord}
\DeclareMathOperator{\Image}{Im}
\title[Weak approximation for Campana points]
{Weak approximation for Campana points on projective spaces}

\author{Dasheng Wei}
\address{Academy of Mathematics and System Science, CAS, Beijing 100190,
P.\ R.\ China {\normalfont\itshape and} School of Mathematical Sciences,
University of CAS, Beijing 100049, P.\ R.\ China}
\email{dshwei@amss.ac.cn}

\date{September 30, 2026}
\subjclass[2020]{Primary 14G05; Secondary 11G35, 14F22, 14J26}
\keywords{Campana points, weak approximation, Brauer--Manin obstruction}

\begin{document}

\begin{abstract}
We prove weak approximation for Campana points on the projective line
with log Fano boundary, and on projective $n$-space with either $n+3$
hyperplanes in general position or two transverse smooth quadrics,
each of coefficient $1/2$.
\end{abstract}

\maketitle

\section{Introduction}\label{sec:introduction}

The paper is devoted to weak approximation for Campana points on
projective spaces over number fields. Let $X$ be a smooth projective
variety over a number field $k$, and let
\[
 D=\sum_i\left(1-\frac1{n_i}\right)D_i,\qquad 2\leq n_i<\infty,
\]
be an orbifold divisor. Fix a finite set $S$ of places containing the
archimedean places and a proper integral model of the pair. A point
$x\in(X\setminus\Supp D)(k)$ is an $S$-Campana point if
\[
 n_v(\sD_{i,\lambda},x)=0\quad\text{or}\quad
 n_v(\sD_{i,\lambda},x)\geq n_i
 \qquad(v\notin S,\ \lambda\in I_i(v)),
\]
where $\sD_i$ is the closure of $D_i$ and $\sD_{i,\lambda}$ runs
through the irreducible components of $\sD_i\times_{\OO_S}\OO_v$.
Here $n_v$ is the local intersection
multiplicity defined by the colength of the pulled-back ideal. We use
\cite[Definitions~2.3--2.4]{NS} throughout. Campana weak approximation
means that these points are dense in the product of the local Campana
spaces. These componentwise inequalities differ from the divisibility
conditions for Darmon points; see \cite[\S3.2]{PSTVA} and
\cite[\S2]{MNS}.

Campana introduced orbifolds to retain the contribution of multiple
fibres to the geometry and arithmetic of a fibration
\cite{Cam04,Cam05}. A smooth orbifold is special if, on suitable
smooth birational models, it has no orbifold fibration onto a
positive-dimensional base of general type \cite[\S4.3]{Cam11}.
The following conjecture relates this property to arithmetic.

\begin{campanaconj}[Campana, {\cite[Conjecture~12.21(1)]{Cam11}}]
Let $(X,D)$ be a smooth projective Campana orbifold over a number
field $k$, with simple normal crossings boundary and finite integral
orbifold multiplicities. Then $(X_{\kbar},D_{\kbar})$ is special if
and only if there exist a finite extension $k'/k$, a finite set
$S'\supseteq\Omega_{k'}^\infty$, and a proper integral model over
$\OO_{S'}$ with Zariski-dense $S'$-Campana points in $X_{k'}$.
\end{campanaconj}

Smooth log Fano orbifolds are special \cite[Corollary~6.9]{Cam11}.
Abramovich gives a finer formulation for a constellation supported
by a firmament, whose integral points are predicted to be potentially
dense precisely when the constellation is special
\cite[Conjecture~2.4.19]{Abr09}. We use the componentwise Campana
conditions, not those of an arbitrary firmament. The potential-density
conjecture allows a field extension and enlargement of $S$; the
approximation statements below hold over $k$ for every fixed finite set $S$.

Nakahara and Streeter prove Campana weak approximation on $\PP^n$ with
at most $n+1$ hyperplanes in general position and arbitrary finite
multiplicities, and with $n+2$ hyperplanes of multiplicity two
\cite[Theorem~1.3 and Corollary~1.4]{NS}. They also relate Campana
weak weak approximation to the Hilbert property
\cite[Theorem~1.1]{NS}. Mitankin, Nakahara and Streeter develop a
semi-integral Brauer--Manin obstruction and prove weak approximation
for a single smooth quadric boundary with arbitrary finite
multiplicity \cite[Theorem~1.3(i)]{MNS}. Related counting problems
are studied in \cite{PSTVA}. 

Our first result concerns the projective line.

\begin{thm}\label{thm:line}
Let $P_i$ be distinct closed points of $\PP^1_k$, and let
\[
 D=\sum_{i\in I}\left(1-\frac1{n_i}\right)P_i,
 \qquad 2\leq n_i<\infty.
\]
Suppose that
\begin{equation}\label{eq:log-fano-degree}
 \sum_{i\in I}\left(1-\frac1{n_i}\right)[k(P_i):k]<2.
\end{equation}
Then $(\PP^1_k,D)$ satisfies Campana weak approximation for every
fixed finite set $S\supseteq\Omega_k^\infty$ and every fixed proper
flat integral model. More precisely,
global $S$-Campana points outside $\Supp D$ are dense in the product
of the local Campana spaces.
\end{thm}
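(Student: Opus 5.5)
The plan is to reduce to a finite list of configurations and, in each case, produce Campana points as images of rational points under finite covers that realise the orbifold structure. First, condition \eqref{eq:log-fano-degree} forces $\sum_i[k(P_i):k]\le 3$, since each term $(1-1/n_i)[k(P_i):k]$ is at least $\tfrac12[k(P_i):k]$. Enumerating gives the following configurations:
\begin{enumerate}
\item at most two rational points, with arbitrary multiplicities;
\item one point of degree $2$, with arbitrary multiplicity;
\item one point of degree $3$ with $n=2$;
\item a point of degree $2$ with multiplicity $n$ together with a rational point of multiplicity $m$, where $(n,m)=(2,m)$ or $(3,2)$;
\item three rational points, with multiplicities $(2,2,m)$, $(2,3,3)$, $(2,3,4)$ or $(2,3,5)$ up to order.
\end{enumerate}
Case (1) is the case $n=1$ of the theorem of Nakahara--Streeter on at most $n+1$ hyperplanes \cite[Theorem~1.3]{NS}. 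The remaining cases are treated uniformly by a cover argument.

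In each remaining case, after a finite extension there is a Galois cover $\pi\colon\PP^1\to\PP^1$ with group $G$ (cyclic, dihedral, tetrahedral, octahedral or icosahedral) that is branched exactly over $\Supp D$ with ramification index $n_i$ above $P_i$. Since the branch divisor is defined over $k$, the cover descends to $k$ as a $G$-torsor $Y\to\PP^1\setminus\Supp D$, where $Y$ is an open subset of a conic, possibly after passing to a form. For each class $\tau\in H^1(k,G)$ we get a twist $\pi_\tau\colon C_\tau\to\PP^1$, with $C_\tau$ a conic. Enlarge $S$ to a finite set $T$ containing the places dividing $|G|$ and the places of bad reduction of the cover and of the model. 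At $v\notin T$, a point of $C_\tau(k_v)$ whose reduction is integral maps to a point whose local intersection multiplicity with each $\sD_{i,\lambda}$ is a multiple of the ramification index $n_i$, so it is either $0$ or at least $n_i$. Consequently $\pi_\tau(C_\tau(k))\setminus\Supp D$ consists of points that are Campana at all places outside $T$, provided $\tau$ is unramified outside $T$. Conversely, every local point of $\PP^1\setminus\Supp D$ over $k_v$ lifts to $C_{\tau_v}(k_v)$ for the local class $\tau_v$ given by its fibre.

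Now fix a finite set $\Sigma\supseteq T$ of places and local Campana points $x_v$ for $v\in\Sigma$. By moving each $x_v$ slightly, which is allowed because the local Campana spaces are open conditions off $\Supp D$, we may assume that no $x_v$ lies on $\Supp D$. Each $x_v$ then determines a local class $\tau_v\in H^1(k_v,G)$. The key step is to find a global $\tau\in H^1(k,G)$, unramified outside $\Sigma$ together with one auxiliary place, whose restrictions agree with the $\tau_v$ for $v\in\Sigma$. Then $C_\tau$ has points near lifts of all the $x_v$, so it is a conic with rational points, hence $C_\tau\cong\PP^1$ and has weak approximation. A rational point of $C_\tau$ approximating these lifts maps to a global $S$-Campana point approximating the $x_v$.

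I expect the main obstacle to be this global realisation of prescribed local torsor classes for nonabelian $G$, where the Grunwald problem can fail, together with the Campana condition at the auxiliary ramified place. I would first use that $G$ here is solvable of small order, or that the torsors arise from Kummer and Artin--Schreier type constructions (for example a degree-$2$ point cut out by a quadratic form), to handle the Grunwald step directly. If that fails, I would use a fibration argument in the spirit of Harari's formal lemma: vary the point in the base of a family of twists so that a single global twist has local points at all places of $\Sigma$. At the extra place, the Campana condition must be checked by hand using a point of $C_\tau$ with integral reduction.
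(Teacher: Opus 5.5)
Your case analysis and the appeal to Nakahara--Streeter for at most two rational points are fine, but the proof stalls at the step you yourself call key. You need $\tau\in H^1(k,G)$ with prescribed restrictions $\tau_v$ at every $v\in\Sigma$, a set containing the places dividing $|G|$, and ramified at only one further place. That is a Grunwald problem with an extra ramification constraint, and nothing in the proposal solves it. Solvability does not help in the icosahedral case $(2,3,5)$, whose geometric deck group $A_5$ is not solvable. For general groups, already for cyclic ones of order divisible by $8$, arbitrary local classes at places above $2$ need not come from a global class. The Campana condition at the auxiliary place $w$ is also left open: a twist with bad reduction at $w$ has no evident reason to possess $k_w$-points with Campana image.

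The missing idea is that no Grunwald problem has to be solved, because your $\tau_v$ are fibre classes of local points of a base that satisfies weak approximation. Choose $a\in(\PP^1\setminus\Supp D)(k)$ inside small open Campana neighbourhoods of the $x_v$, $v\in\Sigma$, and twist by the class of the fibre over $a$. This twist $Y_a$ has a rational point $q$ above $a$, so $Y_a\simeq\PP^1_k$ at once, with no reciprocity argument. Rational points of $Y_a$ close to $q$ at $v\in\Sigma$ then map into the prescribed neighbourhoods.

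The cost is that $Y_a$ has bad reduction at every place outside $\Sigma$ where $a$ reduces into the branch divisor. There are finitely many such places, but they are not a single place you choose. They are controlled by the paper's Lemma~\ref{lem:line-local}. Suppose a twist has a $k_v$-point over a point reducing into $\sZ$, and put $K=k_v$. Because the ramification divisor $R$ is finite \'etale outside $T$, the Galois- and deck-equivariant reduction bijection $R(\overline K)\to\sR(\overline{\FF}_v)$ yields a $k_v$-point $r$ of the twist lying on the ramification divisor. Near $r$, with $u$ a local parameter at $r$, the branch equation pulls back to $u^{e_j}h(u)$ with $h(0)\neq0$. So the intersection multiplicity of $\pi_a(q')$ with the branch component is $e_j\ord_v(u(q'))+\ord_v(h(0))$, which is unbounded as $q'\to r$. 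Hence an open set of $k_v$-points has Campana image.

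Weak approximation on $Y_a$ then gives a rational point meeting all these local conditions. At the remaining places the multiplicities are multiples of the $e_j$, so the Campana conditions hold there automatically. Your fallback of varying the point in the base points in this direction, but it is precisely this local lemma that makes the uncontrolled new bad places harmless.
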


The degree inequality is equivalent to the ampleness of $2H-D$,
where $H$ is a rational point of $\PP^1$. Our next two results
concern higher-dimensional projective spaces.

\begin{thm}\label{thm:hyperplanes}
Let $n\geq2$, and let $E_1,\ldots,E_{n+3}\subset\PP^n_k$ be
distinct $k$-hyperplanes in general position. Set
\[
 D=\frac12\sum_{i=1}^{n+3}E_i,\qquad
 U=\PP^n_k\setminus\bigcup_{i=1}^{n+3}E_i.
\]
Fix a finite set $S\supseteq\Omega_k^\infty$, and let $\sE_i$ be
the closure of $E_i$ in $\PP^n_{\OO_S}$. Then the
$S$-Campana points in $U(k)$ are dense in the product of the local
Campana spaces. In particular, $(\PP^n_k,D)$ satisfies Campana weak
approximation.
\end{thm}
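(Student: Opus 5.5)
Theorem~\ref{thm:hyperplanes} will be reduced to a fibration or covering argument that turns Campana points of multiplicity two into rational points on an auxiliary variety, where weak approximation is already known. Since $n+3$ hyperplanes in general position can be moved by $\mathrm{PGL}_{n+1}(k)$ to a standard form, I write $E_i=\{x_{i-1}=0\}$ for $1\le i\le n+1$, $E_{n+2}=\{x_0+\dots+x_n=0\}$ and $E_{n+3}=\{\sum a_jx_j=0\}$ with $a_j\in k^\times$ pairwise distinct. After enlarging $S$ we may work with this normalization: a point that is Campana for a larger $S'$ and close to the local conditions at $S'\setminus S$ is Campana for $S$. The change of coordinates alters the integral model only at finitely many places, and those places are absorbed into the approximation conditions. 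Outside the primes dividing the $a_j$, their differences and the class number data, a point with primitive coordinates is $S$-Campana exactly when each linear form $\ell_i(x)$ is squareful away from $S$.

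The main construction fibres $\PP^n$ over $\PP^{n-1}$ (or over $\PP^1$) by lines through a suitable point, reducing to the known cases. First consider the pencil of lines through the point $E_1\cap\dots\cap E_{n-1}$ in the dual setup. More robustly, I use the double cover $\pi\colon Y\to\PP^n$ given by $\ell_i=y_i^2$ for $i\le n+1$, i.e.\ the squaring map $\PP^n\to\PP^n$, $[y]\mapsto[y^2]$. Rational points of $\PP^n$ with $y$ integral, where the $y_i$ are pairwise coprime up to $S$, map to points where the first $n+1$ forms are squares, hence Campana for $E_1,\dots,E_{n+1}$. The pullbacks of $E_{n+2},E_{n+3}$ become two diagonal quadrics $Q_1=\{\sum y_j^2=0\}$ and $Q_2=\{\sum a_jy_j^2=0\}$, and these are transverse because the $a_j$ are distinct. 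It therefore suffices to find $y\in\PP^n(k)$ with $Q_1(y)$ and $Q_2(y)$ squareful, approximating prescribed local Campana points. This is the situation of two transverse quadrics, which the second theorem of the paper (announced in the abstract) treats. If I am only allowed earlier results, I would instead prove it directly: the Campana condition is equivalent to $Q_1(y)=u z^2 w^3$-type representations, and I would parametrize $Q_1(y)=t_1 s_1^2$, $Q_2(y)=t_2 s_2^2$ with $t_i$ cubefree-supported squarefull parts ranging over finitely many $S$-unit classes times a cube. Concretely, I would consider the variety $W\colon Q_1(y)=t_1z_1^2,\ Q_2(y)=t_2z_2^2$, an intersection of two quadrics in $\PP^{n+2}$ of dimension $n$. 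For fixed squareful $t_i$ its points give Campana points, and Campana local points lift locally to $W$ for suitable $t_i$.

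The key steps are as follows. (1) Local lifting: each local Campana point of $(\PP^n,D)$ near the target lifts to a local point of the cover with appropriate $y$ and to a local point on some $W_{t}$, where the $t$ are chosen via the strong approximation on squareful values of $S$-units times squares and cubes. (2) Global approximation on $W_t$: these are smooth intersections of two quadrics in $\PP^{n+2}$ with $n+2\ge4$. For $n\ge3$ the ambient dimension is at least $5$, and weak approximation holds once there is a rational point, by Heath-Brown for $\ge8$ variables and by Colliot-Thélène--Sansuc--Swinnerton-Dyer fibration arguments with the Brauer--Manin obstruction otherwise. (3) Twisting by $t$ kills the Brauer obstruction: as in the proofs of \cite{NS} for $n+2$ hyperplanes, the freedom to vary $t_1,t_2$ among elements that are squares times cubes lets us make the Brauer pairing vanish. (4) Global coprimality of the $y_i$ is handled by a sieve, or by noting that non-coprimality only contributes even valuations.

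The main obstacle is step (2)--(3) for small $n$, especially $n=2$, where $W_t$ is a del Pezzo surface of degree $4$. There weak approximation is only conditional, or is known via a conic bundle structure. I would try first to exhibit a conic bundle: on $W_t$ the pencil $Q_1-\lambda Q_2$ gives a fibration, or the projection from the line $y_0=y_1=0$ fails, so instead I use Salberger--Skorobogatov descent for degree $4$ del Pezzo surfaces with a rational point. I would then choose $t$ to annihilate the finitely many Brauer classes, using the Hilbert-property style argument of \cite[Theorem~1.1]{NS}.
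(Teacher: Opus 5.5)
There is a genuine gap at the reduction ``It therefore suffices to find $y\in\PP^n(k)$ with $Q_1(y),Q_2(y)$ squareful''. Your squaring map $[y]\mapsto[y_0^2:\cdots:y_n^2]$ is \emph{untwisted}. Its image on $k_v$-points therefore consists of points whose nonzero coordinate ratios are squares in $k_v$. This misses nonempty open subsets of $\CC_v(\PP^n,D)$. At a real place it misses the points with $x_0/x_1<0$. At almost every finite place it misses the points with all $\ell_i(x)$ units but $x_0/x_1$ a nonsquare unit, or with $\ord_v(x_0/x_1)=3$. So no control over $Q_1,Q_2$ can make the images of rational points of this single cover dense.

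Your varieties $W_t$ have a similar defect. Nothing in the construction supplies the rational point on $W_t$ that your weak-approximation step presupposes. In addition, \cite[Theorem~1.1]{NS} relates \emph{weak} weak approximation to the Hilbert property; it cannot remove a Brauer--Manin obstruction at prescribed places.

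The missing idea is to twist the cover by a rational point near the target. Choose $a\in U(k)$ in the prescribed local open sets and set $\ell_i(x)=\ell_i(\widetilde a)\,y_i^2$. For all $n+3$ forms this gives the paper's $Y_a\subset\PP^{n+2}$; for your first $n+1$ forms it gives a twisted squaring map. Either way $(1:\cdots:1)$ is a rational point above $a$, and local points near it map near the targets.

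The price is that at the finitely many places where some $\ell_i(\widetilde a)$ is a nonunit, the cover no longer forces even valuations, so a local repair is needed there. The paper picks $a$ by strong approximation on $\PP^n\setminus\bigcup_{i<j}(E_i\cap E_j)$, so that outside $T$ at most one $\ell_i(\widetilde a)$ is a nonunit. It then raises that valuation by $2$ (Lemma~\ref{lem:hyperplane-repair}). For $n=2$ it prescribes valuations of $a$ at four auxiliary places, making $\delta_1,\dots,\delta_4$ independent, so that $\Br(Y_a)=\Br(k)$ (Proposition~\ref{prop:hyperplane-choice}).

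With the twist and this repair, your reduction to Theorem~\ref{thm:quadrics} does go through. Write $\alpha_j$ for your coefficients of $E_{n+3}$ and put $t_j=\ell_j(\widetilde a)$. Apply that theorem to $\bigl(\PP^n_y,\tfrac12(C_1+C_2)\bigr)$, where $C_1=\{\sum t_jy_j^2=0\}$ and $C_2=\{\sum\alpha_jt_jy_j^2=0\}$, with $S$ enlarged by the places where some $t_j$ is a nonunit. The $n=2$ Brauer issue is then absorbed by the residue argument inside that theorem, and the composite cover is essentially the paper's $Y_a$.
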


Here general position means that any $n+1$ defining linear forms
are linearly independent. For $n=2$ this gives five lines with no
triple intersection.

\begin{thm}\label{thm:quadrics}
Let $n\geq2$, and let $C_1,C_2\subset\PP^n_k$ be smooth
$k$-quadric hypersurfaces meeting transversely. Set
\[
 D=\frac12(C_1+C_2),\qquad
 U=\PP^n_k\setminus(C_1\cup C_2).
\]
For  any finite set $S\supseteq\Omega_k^\infty$,  the
$S$-Campana points in $U(k)$ are dense in the product of the local
Campana spaces. In particular, $(\PP^n_k,D)$ satisfies Campana weak
approximation.
\end{thm}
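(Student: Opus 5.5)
We aim to reduce Theorem~\ref{thm:quadrics} to Theorem~\ref{thm:line} by restricting to lines \emph{tangent} to one of the quadrics. Write $C_i=\{q_i=0\}$ with $q_i$ integral quadratic forms, and let $B_1$ be the bilinear form of $q_1$. If $p\in C_1$ and $d\in T_pC_1$, then on the line $L=\{p+td\}$ we have
\[
q_1(p+td)=t^2q_1(d).
\]
So $C_1|_L=2P$ is a double point. For a primitive point of $L$, the valuation of $q_1$ equals $2\ord_v(t)+\ord_v(q_1(d))$, up to a bounded correction coming from the primitivity normalisation of $(p,d)$. If we arrange that $\ord_v q_1(d)$ is even, or at least $0$ or $\geq 2$, at every $v\notin S$, the $C_1$-Campana condition holds automatically on $L$. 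The remaining boundary on $L$ is $\tfrac12(C_2\cap L)$, a degree-$2$ point of weight $\tfrac12$. Thus \eqref{eq:log-fano-degree} holds, since $\tfrac12\cdot2=1<2$. Theorem~\ref{thm:line} then gives Campana weak approximation on $(L,\tfrac12(C_2\cap L))$ for the restricted model.

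\textbf{Step 1 (local lines).} Fix a finite set $T$ of places and local Campana points $x_v\in U(k_v)$ for $v\in T$. For each $v$ we find a local tangent line through $x_v$. We look for $p_v\in C_1(k_v)$ and $d_v$ with $B_1(p_v,d_v)=0$ and $x_v=p_v+t_vd_v$; then $q_1(x_v)=t_v^2q_1(d_v)$. This is solvable because $C_1$ is smooth of dimension $\geq1$ with local points near any given tangent configuration. We may perturb $x_v$ inside the open local Campana set, since the conditions are open.

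\textbf{Step 2 (global lines).} We want a global pair $(p,d)$ with $p\in C_1(k)$ and $d\in T_pC_1$ that is $v$-adically close to $(p_v,d_v)$ for $v\in T$, and with $q_1(d)\in k^{\times}$ having even valuation at every $v\notin S\cup T$ (for instance $q_1(d)$ an $S$-unit times a square). We also need $L$ transverse to $C_2$, so that $C_2\cap L$ is reduced of degree $2$. Since $C_1$ is a smooth quadric with local points at $T$, if it has a $k$-point it is rational and satisfies weak approximation, so $p$ can be chosen. The choice of $d$ amounts to representing a prescribed square class by the quadratic form $q_1$ restricted to $T_pC_1$, an $(n-1)$-ary form, with weak approximation. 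We handle this by choosing $d$ from a rational parametrisation of the quadric $q_1(d)=c\,s^2$ inside $T_pC_1$. Then we check that, for a suitably chosen integral model of $L$, the restricted $S$-Campana conditions on $L$ imply the original ones on $\PP^n$. For $C_2$ this is the relation between $n_v(\sC_2,x)$ and the intersection multiplicity on the closure of $L$. This needs $L$ to have good reduction relative to $C_2$ away from $S\cup T$, and we absorb the finitely many bad places into $T$ by imposing local conditions there too.

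\textbf{Step 3.} Applying Theorem~\ref{thm:line} on $L$ with approximation at $T$ gives a global point near each $x_v$ that is an $S$-Campana point for both $C_1$ and $C_2$. This proves density.

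The main obstacle is Step 2, on two counts. First, $C_1$ may have no $k$-point even though local Campana points exist. In that case we would first try replacing $C_1$ by another member $C_\lambda=\{q_1-\lambda q_2=0\}$ of the pencil that has a rational point and a smooth tangent structure; such a member exists over a dense set of $\lambda$ by fibration arguments on the pencil. Tangent lines to $C_\lambda$ still make $q_1$ and $q_2$ congruent modulo a square on $L$, but this variant needs to be checked separately. Second, the parity bookkeeping for $q_1(d)$ and the integral-model compatibility at the places of bad reduction require care.
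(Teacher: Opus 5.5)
There is a genuine gap, and it appears already in Step~1, before the global difficulties you flag. The claim that every $x_v$ lies on a $k_v$-rational tangent line to $C_1$ is false.

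A line through $x\notin C_1$ is tangent to $C_1$ at $p$ exactly when $q_1(p)=0$ and $B_1(p,x)=0$. A $k_v$-rational tangent line also has a $k_v$-rational point of tangency. Hence $x_v$ lies on such a line if and only if $q_1$ restricted to the polar hyperplane $x_v^{\perp}$ is isotropic over $k_v$. Since $q_1\simeq\langle q_1(x_v)\rangle\perp q_1|_{x_v^\perp}$, this depends only on the square class of $q_1(x_v)$. It is therefore locally constant in $x_v$, and perturbing $x_v$ inside $\CC_v$ cannot help.

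For $n=2$, every point $p+td$ of a rational tangent line satisfies $q_1(p+td)=t^2q_1(d)\in-\det(q_1)\,k^{\times2}$. So these points reach only one square class of values of $q_1$ at each place. For instance, with $q_1=x_1^2+x_2^2-x_0^2$ over $\mathbf Q$, they all satisfy $q_1>0$. They therefore cannot approximate a real point inside the circle, even though the real place lies in $S$ and $\CC_v=\PP^2(\mathbf R)$ imposes no condition there.

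For $n\geq3$ the same failure occurs at a real place where $q_1$ has signature $(1,n)$ or $(n,1)$: the polar hyperplane of a point inside the real sphere is definite. Campana weak approximation asks for density in the whole product. A family of lines whose local points do not fill a dense subset of $\prod_{v\in T}\PP^n(k_v)$ cannot prove the theorem, however Step~2 is carried out.

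The fallback for $C_1(k)=\varnothing$ does not repair this. On a line tangent to $C_\lambda=\{q_1-\lambda q_2=0\}$ you only get $q_1-\lambda q_2=ct^2$, which says nothing about the parity of $q_1$ or $q_2$ separately. The restricted orbifold $\frac12(C_1\cap L)+\frac12(C_2\cap L)$ then has degree $2$ and geometric signature $(2,2,2,2)$, so \eqref{eq:log-fano-degree} fails and Theorem~\ref{thm:line} does not apply. Only $\lambda=0,\infty$ produce a double point, and when a real local point lies inside both real ovals, neither works.

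The paper never restricts to a subvariety of $\PP^n$. It first chooses $a\in U(k)$ close to the local data, using strong approximation on $\PP^n\setminus(C_1\cap C_2)$ so that outside $T$ the reduction of $a$ lies on at most one quadric. It then passes to the degree-four cover $Y_a:\ q_1=c_1Z_1^2,\ q_2=c_2Z_2^2$ with $c_i=q_i(\widetilde a)$. This is a smooth intersection of two quadrics with the rational point $P_0$ above $a$.

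At good places the multiplicities $2\ord_v(Z_i)$ are automatically even. At the finitely many places dividing $c_1c_2$, a local repair point is used. For $n=2$, Lemma~\ref{lem:residue} shows that the repair preserves Brauer evaluations, so Proposition~\ref{prop:pointed-quadrics-arithmetic} applies. Because $a$ can be taken anywhere near the local data, no local point is out of reach.
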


Transversality means that $C_1\cap C_2$ is smooth of codimension
two; neither the quadrics nor their intersection need have rational
points. The pairs in Theorems~\ref{thm:hyperplanes} and
\ref{thm:quadrics} are log Fano, since $-(K_{\PP^n}+D)$ is
respectively equivalent to $\frac{n-1}{2}H$ and $(n-1)H$.

The proofs first choose a rational point on the base, then a finite
cover with a rational point above it. In the higher-dimensional cases,
strong approximation away from a codimension-two subset ensures that
the chosen point reduces onto at most one boundary component outside
the prescribed approximation places. For Theorem~\ref{thm:line}, spherical uniformization gives a
rational curve. For the other two theorems, the covers are smooth
intersections of two quadrics. In dimension at least three, they
satisfy weak approximation. For the surface cases, we either arrange
that the Brauer group is constant or preserve Brauer evaluations
under local modifications. The essential local arguments treat every
new bad place of the cover.
Section~\ref{sec:preliminaries} records the preliminaries, and
Sections~\ref{sec:line-proof}--\ref{sec:quadric-proof} prove the
three theorems in order.

\begin{terminology}
Write $\OO_S$ for the ring of $S$-integers in $k$; the same notation
with $S'$ refers to the corresponding ring in the specified extension
field. Write $\Omega_k$ for the places of $k$, $\Omega_k^\infty$ for the
archimedean places, $k_v$ for the completion, and $\A_k$ for the
adeles. For finite $v$, write $\OO_v$, $\FF_v$, $\varpi_v$ and
$\ord_v$ for its valuation ring, residue field, a uniformizer and
the normalized valuation. We use
$\Br(Y)=H^2_{\et}(Y,\Gm)$ and
$\Br_0(Y)=\Image(\Br(k)\to\Br(Y))$. For proper $Y$, the notation
$Y(\A_k)^{\Br(Y)}$ denotes the adelic points orthogonal to
$\Br(Y)$ under the Brauer--Manin pairing.
\end{terminology}

\begin{ack}
	The work is supported by National Natural Science Foundation of China (Grant Nos. 12371014 and 12231009).  
\end{ack}

\section{Preliminaries}\label{sec:preliminaries}

Let $X$ be smooth, projective and geometrically integral, and let
$D=\sum_i(1-1/n_i)D_i$ with distinct prime divisors $D_i$ defined
over $k$ and integers $2\leq n_i<\infty$. The $D_i$ need not be
geometrically irreducible. Put $X^\circ=X\setminus\Supp D$, and
fix a proper flat integral model $\sX$ over $\OO_S$, with boundary
closures $\sD_i$.

For $v\notin S$, write
\[
 D_i\times_k k_v=\bigcup_{\lambda\in I_i(v)}D_{i,\lambda},
\]
where the $D_{i,\lambda}$ are the $k_v$-irreducible components, and
let $\sD_{i,\lambda}$ be their reduced closures in $\sX_{\OO_v}$.
These are the irreducible components of $\sD_i\times_{\OO_S}\OO_v$
used in \cite[Definition~2.2]{NS}; they are not the irreducible
components of the special fibre.

A point $x\in X(k_v)$ extends uniquely to a section
$\widetilde x:\Spec\OO_v\to\sX_{\OO_v}$. For a closed subscheme
$\mathscr E\subset\sX_{\OO_v}$ with ideal sheaf $\mathcal I_{\mathscr E}$,
put $J_{\mathscr E,x}=\widetilde x^{-1}\mathcal I_{\mathscr E}\cdot\OO_v$
and define its \emph{local intersection multiplicity} with $x$ by
\[
 n_v(\mathscr E,x)=
 \begin{cases}
 \operatorname{length}_{\OO_v}(\OO_v/J_{\mathscr E,x}),
     &J_{\mathscr E,x}\ne0,\\
 \infty,&J_{\mathscr E,x}=0.
 \end{cases}
\]
Equivalently, $J_{\mathscr E,x}=\mathfrak m_v^{\,n_v(\mathscr E,x)}$
when the intersection multiplicity is finite. If $\mathscr E$ is locally
cut out by one equation $f$ near the reduction of $x$, then
$n_v(\mathscr E,x)=\ord_v(f(x))$, with $\ord_v(0)=\infty$.
Applied to each $\sD_{i,\lambda}$, this is the intersection multiplicity
of \cite[Definition~2.3]{NS}.

Following \cite[Definition~2.4]{NS}, set
\begin{align*}
 \CC_v(X,D)&=\{x\in X(k_v):
 n_v(\sD_{i,\lambda},x)\in
       \{0\}\cup\ZZ_{\geq n_i}\cup\{\infty\}
 \text{ for all }i,\lambda\in I_i(v)\}, &&v\notin S,\\
 \CC_v(X,D)&=X(k_v),&&v\in S,\\
 \CC_S(X,D)&=\{x\in X^\circ(k):
             x\in\CC_v(X,D)\text{ for every }v\notin S\}.
\end{align*}
Thus the inequalities are imposed separately on the local
irreducible components, not merely on the total intersection multiplicity
with a $k$-prime divisor. If $D_i$ is geometrically integral,
$I_i(v)$ has one element, and we abbreviate the corresponding
intersection multiplicity to $n_v(\sD_i,x)$.
Campana weak approximation is the density of $\CC_S(X,D)$ in
$\prod_v\CC_v(X,D)$ with the product topology. We require the global
approximating point to lie outside the boundary, although local
points on the boundary are allowed. Only finitely many local
approximations are prescribed, but the global point must satisfy
the inequalities at every finite $v\notin S$.

\begin{lem}\label{lem:open}
The space $\CC_v(X,D)$ is open in $X(k_v)$, and
$X^\circ(k_v)\cap\CC_v(X,D)$ is dense in $\CC_v(X,D)$.
\end{lem}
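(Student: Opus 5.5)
For $v\in S$ there is nothing to prove: $\CC_v(X,D)=X(k_v)$, and $X^\circ(k_v)$ is dense in $X(k_v)$ because $\Supp D$ is a proper Zariski-closed subset of the smooth variety $X$. The $k_v$-analytic implicit function theorem makes $X(k_v)$ locally homeomorphic to an open subset of $k_v^{\dim X}$, and a proper Zariski-closed set has empty interior there. So assume $v\notin S$.

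\emph{Openness.} I would use continuity of the multiplicity under the $v$-adic topology. Since $\sX$ is proper over $\OO_v$, we have $X(k_v)=\sX(\OO_v)$. Reduction modulo $\varpi_v^m$ gives the neighbourhoods $U_m(x)=\{y:\widetilde y\equiv\widetilde x \bmod \varpi_v^m\}$, which are open and form a basis.

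The ideal $J_{\sE,y}$ modulo $\varpi_v^m$ depends only on $\widetilde y \bmod \varpi_v^m$, because pulling back $\mathcal I_{\sE}$ commutes with base change to $\OO_v/\varpi_v^m$. Hence if $n_v(\sE,x)=r<\infty$, then every $y\in U_{r+1}(x)$ has $J_{\sE,y}+\mathfrak m_v^{r+1}=\mathfrak m_v^r$, which forces $n_v(\sE,y)=r$. So the multiplicity is locally constant wherever it is finite.

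If $n_v(\sE,x)=\infty$, the same argument gives $n_v(\sE,y)\ge m$ on $U_m(x)$. Choosing $m\ge n_i$, every nearby $y$ has multiplicity in $\ZZ_{\ge n_i}\cup\{\infty\}$.

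The conditions in the definition of $\CC_v(X,D)$ involve only finitely many pairs $(i,\lambda)$. Intersecting the resulting neighbourhoods over these pairs shows that $\CC_v(X,D)$ is open.

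\emph{Density.} Take $x\in\CC_v(X,D)$ and an open neighbourhood $W$ of $x$. Shrinking $W$ to $W\cap\CC_v(X,D)$, which is open by the first part, I may assume $W\subset\CC_v(X,D)$. Then $W$ is a nonempty open subset of $X(k_v)$, so by the density fact from the first paragraph it meets $X^\circ(k_v)$. This proves the second claim.

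The only step needing care is that $J_{\sE,y} \bmod \varpi_v^m$ is determined by $\widetilde y \bmod \varpi_v^m$, and that finite multiplicity is then read off at level $r+1$. I would check this locally on an affine open of $\sX_{\OO_v}$ containing the reduction of $x$. There $\mathcal I_{\sE}$ is generated by finitely many functions $f_1,\dots,f_s$, and $J_{\sE,y}=(f_1(\widetilde y),\dots,f_s(\widetilde y))$. So $n_v(\sE,y)=\min_j\ord_v f_j(\widetilde y)$, which is visibly locally constant where finite and is at least $m$ near points where it is infinite.
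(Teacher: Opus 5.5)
Your proposal is correct and follows essentially the same route as the paper: the multiplicity is the minimum of the valuations of finitely many local generators, so it is locally constant where finite and exceeds any bound near common zeros. Openness then follows from there being finitely many components with finite weights, and density from the fact that the proper Zariski-closed set $\Supp D$ has empty $v$-adic interior. Your check via $J_{\sE,y}+\mathfrak m_v^{r+1}$ is just a more explicit version of the paper's local-constancy step.
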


\begin{proof}
At a finite place, reduction of sections modulo a fixed power of
$\mathfrak m_v$ is locally constant for the $v$-adic topology.
Near the reduction of a section, choose finitely many generators
of the ideal of each $\sD_{i,\lambda}$. Its intersection multiplicity
is the minimum of their valuations. A finite minimum is locally
constant; near a common zero all these valuations exceed any fixed
bound. There are only finitely many local components, and all
weights are finite, so the Campana conditions are open. The assertion
is immediate at places of $S$. Finally, a proper closed subset of a smooth geometrically integral
$k_v$-variety contains no nonempty $v$-adic open subset.
Applying this to $\Supp D$ proves the density assertion.
\end{proof}

\smallskip
\noindent\textbf{Avoiding a codimension-two subset.}
We use the standard fact that $\PP^n_k\setminus Z$ satisfies strong
approximation whenever $Z\subset\PP^n_k$ is closed of codimension
at least two; see, for example, \cite[Proposition~2.7]{ChenAP}.
Let $\sZ$ be a closed model of $Z$ and put
$\mathcal W=\PP^n_{\OO_S}\setminus\sZ$. For any finite
$T\supseteq S$ such that $\mathcal W(\OO_v)\ne\varnothing$ for
every $v\notin T$, and any nonempty open sets
$V_v\subset(\PP^n\setminus Z)(k_v)$, $v\in T$, strong approximation
gives a rational point $a$ belonging to every $V_v$ and satisfying
\[
 a\in\mathcal W(\OO_v)\qquad(v\notin T).
\]
Thus the reduction of $a$ avoids $\sZ$ outside $T$; no such
reduction condition is imposed at the prescribed approximation places.

We shall use the following arithmetic of intersections of two quadrics.

\begin{prop}\label{prop:pointed-quadrics-arithmetic}
Let $Y\subset\PP^{m+2}_k$ be a smooth geometrically integral
intersection of two quadrics, with $m\geq2$ and $P_0\in Y(k)$.
 Then
$\overline{Y(k)}=Y(\A_k)^{\Br(Y)}$, where closure is taken in
the usual adelic topology (the product topology since $Y$ is proper).
If $m=2$, the group $\Br(Y)/\Br(k)$ is finite and killed by $2$. If $m\geq3$, then
$\Br(Y)=\Br(k)$ and $Y$ satisfies ordinary weak approximation.
\end{prop}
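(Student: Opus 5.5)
This proposition collects known results on smooth intersections of two quadrics with a rational point, so the plan is to assemble citations rather than prove anything from scratch. I treat the three assertions separately, according to $m$.

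\emph{Brauer group.} For $m\geq3$, $Y$ is a smooth complete intersection of dimension at least $3$. By the Lefschetz hyperplane theorem, $\Pic(Y_{\kbar})=\ZZ H$ with trivial Galois action, and $H^1(Y_{\kbar},\OO)=H^2(Y_{\kbar},\OO)=0$. Geometrically $\Br(Y_{\kbar})=0$, since $H^3(Y_{\kbar},\ZZ_\ell)$ is torsion-free and $b_2=\rho$. The Hochschild--Serre spectral sequence then gives $\Br(Y)/\Br_0(Y)\cong H^1(k,\ZZ)=0$. The map $\Br(k)\to\Br(Y)$ is injective because $Y(k)\ne\varnothing$, so $\Br(Y)=\Br(k)$.

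For $m=2$, $Y$ is a del Pezzo surface of degree $4$. It is geometrically rational, so $\Br(Y_{\kbar})=0$ and
\[
\Br(Y)/\Br(k)\cong H^1(k,\Pic(Y_{\kbar})).
\]
This group is finite, and by the known computation for quartic del Pezzo surfaces (Manin, Swinnerton-Dyer; cf.\ Colliot-Th\'el\`ene--Sansuc--Swinnerton-Dyer) it is one of $0$, $\ZZ/2$ or $(\ZZ/2)^2$. In particular it is killed by $2$.

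\emph{Arithmetic.} For $m=2$, Salberger--Skorobogatov proved that $Y(k)$ is dense in $Y(\A_k)^{\Br(Y)}$ for smooth quartic del Pezzo surfaces with a rational point; here the hypothesis $P_0\in Y(k)$ is used. For $m\geq3$, Colliot-Th\'el\`ene, Sansuc and Swinnerton-Dyer (\emph{Intersections of two quadrics and Ch\^atelet surfaces}) proved weak approximation for smooth intersections of two quadrics in $\PP^n$ with $n\geq5$ possessing a rational point. Combined with $\Br(Y)=\Br(k)$, this gives $\overline{Y(k)}=Y(\A_k)=Y(\A_k)^{\Br(Y)}$; the last equality uses reciprocity for constant classes.

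\emph{Main obstacle.} The only delicate point is pinning down the exact hypotheses of the cited theorems. For $m\geq3$, the CT--S--SD result in $\PP^5$ may require some care (the case $n=5$ is covered there, via fibration into quadric surfaces, given a rational point). For $m=2$, I would cite Salberger--Skorobogatov, whose descent method needs only smoothness and the rational point $P_0$. Finally, because $Y$ is proper, the adelic topology is the product topology, so no extra integrality issues arise.
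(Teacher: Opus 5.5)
Your proposal is correct and follows essentially the same route as the paper. For $m=2$ both use Salberger--Skorobogatov together with the known list $0,\ZZ/2,(\ZZ/2)^2$ for $\Br(Y)/\Br(k)$. For $m\geq3$ both combine a Lefschetz/Hochschild--Serre computation giving $\Br(Y)=\Br(k)$ with a weak-approximation theorem. The paper gets $\Br(Y_{\kbar})=0$ a little more directly, from the Kummer sequence and $H^2_{\et}(Y_{\kbar},\mu_r)=\ZZ/r$, rather than from $b_2=\rho$ and torsion-freeness of $H^3$.

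The $\PP^5$ case you flagged is handled differently in the paper. It cites \cite[Theorem~3.11]{CTSSD} only for $m\geq4$. For $m=3$ it invokes a separate Brauer--Manin density result, \cite[Theorem~1]{CTS92}, and then deduces ordinary weak approximation from $\Br(Y)=\Br(k)$, which avoids needing CTSSD to cover $n=5$.
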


\begin{proof}
If $m=2$, then $Y$ is a del Pezzo surface of degree $4$.
The density statement is \cite[Theorem~0.1]{SS}, and
$\Br(Y)/\Br(k)$ is isomorphic to $0$, $\ZZ/2$ or $(\ZZ/2)^2$
by \cite[Remark~1.2(1)]{VAV}. 

For $m=3$, the Brauer--Manin approximation statement follows from
\cite[Theorem~1]{CTS92}; for $m\geq4$, ordinary weak approximation
is just \cite[Theorem~3.11]{CTSSD}. We explain why the former statement
also gives ordinary weak approximation. 

For every $m\geq3$,
the Lefschetz hyperplane theorem gives $\Pic(Y_{\kbar})=\ZZ[\OO_Y(1)]$ and
$H^2_{\et}(Y_{\kbar},\mu_r)=\ZZ/r$, generated by the hyperplane
class. In the Kummer sequence
\[
 0\longrightarrow\Pic(Y_{\kbar})/r
 \longrightarrow H^2_{\et}(Y_{\kbar},\mu_r)
 \longrightarrow\Br(Y_{\kbar})[r]\longrightarrow0,
\]
the first map is therefore an isomorphism. Thus
$\Br(Y_{\kbar})=0$. The hyperplane class is defined over $k$, so
Hochschild--Serre gives
\[
 \Br(Y)/\Br(k)\hookrightarrow
 H^1(k,\Pic(Y_{\kbar}))=H^1(k,\ZZ)=0.
\]
Hence
$\Br(Y)=\Br(k)$, and global reciprocity gives
$Y(\A_k)^{\Br(Y)}=Y(\A_k)$.
\end{proof}

The rational point in the proposition is part of its hypotheses;
it will be supplied by the construction of each cover. In the surface
case, the result of Salberger--Skorobogatov is being applied only
after this point is available. 
The distinction between surfaces and higher-dimensional varieties
explains the extra discriminant argument in Section~\ref{sec:hyperplanes}
and the use of the following residue lemma in the two-conic case.

\begin{lem}\label{lem:residue}
Let $R$ be the valuation ring of a nonarchimedean local field $K$
of odd residue characteristic, with residue field $\FF$. Let
$\mathcal M$ be smooth, separated and of finite type over $R$.
If $P,Q\in\mathcal M(R)$ reduce to a locally closed subvariety
$A\simeq\Aff^r_{\FF}$ of $\mathcal M_{\FF}$, then
$\beta(P)=\beta(Q)$ for every $\beta\in\Br(\mathcal M_K)[2]$.
\end{lem}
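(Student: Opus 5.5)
The plan is to reduce to the case where $P$ and $Q$ have the same reduction, i.e.\ the same $\FF$-point of $A$, and then use the purity/constancy of Brauer evaluations on a residue disc. Two facts will be combined:
\begin{enumerate}
\item For a fixed $\beta$, the evaluation $\mathcal M(R)\to\Br(K)[2]$ is constant on each residue class. That is, it factors through the reduction map $\mathcal M(R)\to\mathcal M(\FF)$, at least after restricting to points reducing into $A$.
\item The induced function on $A(\FF)=\FF^r$ is constant, because $A$ is an affine space.
\end{enumerate}

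For the first fact, I would argue as follows. The residue characteristic $p$ is odd, so $2$ is invertible on $R$. Consider the smooth $R$-scheme $\mathcal M$ and its special fibre $\mathcal M_\FF$, viewed as a divisor. Take an open affine neighbourhood of a point of $A$ and the generic-fibre class $\beta$. By Gabber/Grothendieck purity for $2$-torsion in mixed characteristic with $2$ invertible, $\beta$ has a residue $\partial(\beta)\in H^1(\mathcal M_\FF,\ZZ/2)$ along the special fibre. Concretely, $H^1_{\et}$ of the generic fibre with $\mu_2$-coefficients fits into the Gysin sequence
\[
0\to H^2(\mathcal M,\mu_2)\to H^2(\mathcal M_K,\mu_2)\to H^1(\mathcal M_\FF,\ZZ/2)\to\cdots.
\]
Lift $\beta$ to $H^2(\mathcal M_K,\mu_2)$ via Kummer. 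For $P\in\mathcal M(R)$ with reduction $\bar P$, the functoriality of residues gives
\[
\partial_K(\beta(P_K))=\bar P^*\partial(\beta)\in H^1(\FF,\ZZ/2),
\]
where $\partial_K\colon \Br(K)[2]\to H^1(\FF,\ZZ/2)$ is the isomorphism of local class field theory. Hence $\beta(P)$ is determined by the pullback of the residue class along $\bar P$.

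The remaining task is to restrict $\partial(\beta)$ to $A$. The class $\partial(\beta)|_A$ lies in $H^1(\Aff^r_\FF,\ZZ/2)$. Since $p\ne2$, this group equals $H^1(\FF,\ZZ/2)$, by homotopy invariance of étale cohomology with coefficients prime to $p$; the key input is that $\Aff^r$ has no nontrivial tame $\ZZ/2$-covers. Thus $\partial(\beta)|_A$ is pulled back from $\Spec\FF$, and the pullbacks along $\bar P$ and $\bar Q$ coincide. It follows that $\partial_K(\beta(P))=\partial_K(\beta(Q))$, and injectivity of $\partial_K$ gives $\beta(P)=\beta(Q)$.

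The main technical point is justifying the residue sequence and the compatibility of residues with specialization of sections. The lift of $\beta$ to $H^2(\mathcal M_K,\mu_2)$ is determined only up to the image of $\Pic(\mathcal M_K)/2$. This ambiguity is harmless: evaluating at a point kills the image of $\Pic$ in $\Br$, since line-bundle classes evaluate to $0$ in $\Br(K)$. I would handle the compatibility of residues by working with $H^2(-,\mu_2)$ throughout, and using the purity isomorphism $H^3_{\mathcal M_\FF}(\mathcal M,\mu_2)\simeq H^1(\mathcal M_\FF,\ZZ/2)$ for the smooth pair (absolute purity, or the classical case since the divisor is a fibre of a smooth morphism). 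Naturality under the closed immersion $\Spec R\to\mathcal M$ given by $P$, which is transverse to the special fibre, yields the displayed formula.
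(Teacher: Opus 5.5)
Your proposal is correct and follows essentially the same route as the paper: a residue $\partial\beta\in H^1(\mathcal M_{\FF},\ZZ/2)$ along the special fibre with $\partial_K(\beta(P))=\bar P^*(\partial\beta)$, constancy of $(\partial\beta)|_A$ because $H^1(\Aff^r_{\FF},\ZZ/2)=H^1(\FF,\ZZ/2)$, and injectivity of $\partial_K$. The differences are minor: you build the residue map from the Gysin/purity sequence instead of citing Bright (you can simply fix one Kummer lift of $\beta$, and the initial $0$ in your displayed sequence is not needed), and you compute $H^1$ of affine space by homotopy invariance rather than via units and the Picard group.
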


\begin{proof}

By \cite[Corollary~2.5 and Proposition~5.1]{Bright},  we have
\[
 \partial:\Br(\mathcal M_K)[2]\longrightarrow
 H^1(\mathcal M_{\FF},\ZZ/2),\qquad
 \partial_K(\beta(P))=\bar P^*(\partial\beta).
\]
The smoothness of $\mathcal M$ over $R$ makes its special fibre a regular
divisor; if it is disconnected, the residue map is taken on each
component. The local residue map is an isomorphism
$\partial_K:\Br(K)[2]\xrightarrow{\sim}H^1(\FF,\ZZ/2)$,
since $R$ is henselian and $\Br(\FF)=0$ for a finite field.
By Kummer theory,
$H^1(\Aff^r_{\FF},\ZZ/2)=H^1(\FF,\ZZ/2)$, since the only
units of $\FF[t_1,\ldots,t_r]$ are constants and its Picard group
is zero. Thus $(\partial\beta)|_A$ is constant, and its evaluations
at $\bar P$ and $\bar Q$ agree. Injectivity of $\partial_K$ then
gives $\beta(P)=\beta(Q)$ in $\Br(K)$.
\end{proof}

The reductions $\bar P$ and $\bar Q$ need not be the same point.
The lemma compares their evaluations through the residue class on
$A$, rather than through local constancy in a neighbourhood of one
$K$-point. It does not require $\beta$ to extend to $\Br(\mathcal M)$:
its possible ramification along the special fibre is precisely
what the residue map records.

\section{The projective line}\label{sec:line-proof}

\subsection{Geometric signatures}
Assume the hypotheses of Theorem~\ref{thm:line} and $D\ne0$. Let $r$ be the number of points of
$\Supp(D_{\kbar})$.
Each geometric boundary point contributes at least $1/2$, so \eqref{eq:log-fano-degree} implies $r\leq3$. If there are
three, their ordered multiplicities $a\leq b\leq c$ satisfy
$1/a+1/b+1/c>1$. Thus $a=2$, and either $b=2$, or $b=3$ and
$c<6$. The possible signatures are
\begin{equation}\label{eq:spherical-signatures}
 (2,2,m)\ (m\geq2),\qquad (2,3,3),\qquad (2,3,4),\qquad (2,3,5).
\end{equation}

To include $r=1$ and $r=2$ in a single construction, choose an
auxiliary divisor $\Delta\geq D$ as follows. If $r=1$, the point is rational; add another
rational point of the same multiplicity. If $r=2$, replace both multiplicities by their maximum $N$. If $r=3$, take $\Delta=D$. These operations are Galois invariant and give
\begin{equation}\label{eq:auxiliary-divisor}
	\Delta=\sum_{j\in J}\left(1-\frac1{e_j}\right)Q_j,
	\qquad Z=\Supp\Delta,\qquad U_\Delta=\PP^1\setminus Z.
\end{equation}
Every original $P_i$ is a $Q_j$ with $e_j\geq n_i$. The geometric
signature of $\Delta$ is $(N,N)$ or one of
\eqref{eq:spherical-signatures}.

The auxiliary divisor is necessary for the uniformization used below.
An orbifold with one nontrivial point, or two points of unequal
multiplicities, does not have a scheme as its universal orbifold
cover; see \cite[Propositions~5.5 and~5.7]{BN}. We uniformize
$(\PP^1,\Delta)$, not the original pair in these cases. This neither
replaces the local conditions for $D$ by those for $\Delta$ nor requires
$n_i$ to divide $e_j$.

\subsection{Pointed descent and the deck group}
The spherical signatures in \eqref{eq:auxiliary-divisor} admit
universal orbifold covers over $\PP^1_{\kbar}$; see
Behrend--Noohi~\cite[Proposition~5.5]{BN}. Their geometric deck groups
are recorded below. 
\begin{center}
\begin{tabular}{@{}lll@{}}
\toprule
Signature of $\Delta_{\kbar}$ & $H$ & $\deg\bar\pi=|H|$\\
\midrule
$(N,N)$ & $\ZZ/N\ZZ$ & $N$\\
$(2,2,m)$ & $D_{m}$ & $2m$\\
$(2,3,3)$ & $A_4$ & $12$\\
$(2,3,4)$ & $S_4$ & $24$\\
$(2,3,5)$ & $A_5$ & $60$\\
\bottomrule
\end{tabular}
\end{center}
Here $D_{m}$ has presentation
$\langle r,s:r^m=s^2=1,\ srs=r^{-1}\rangle$; in particular,
$D_2\simeq(\ZZ/2)^2$. For geometric multiplicities
$\varepsilon_1,\ldots,\varepsilon_r$, the orbifold presentation is
\[
 H\simeq\langle\gamma_1,\ldots,\gamma_r:
 \gamma_i^{\varepsilon_i}=1,\ \gamma_1\cdots\gamma_r=1\rangle.
\]
The stabilizer of a geometric point above $Q_j$ is cyclic of order
$e_j$. The action is free precisely away from the ramification divisor.
The table specifies the geometric group, not a constant group scheme
over $k$. 

\begin{prop}\label{prop:pointed-cover}
There is a geometrically Galois finite morphism
$\pi:Y\to\PP^1_k$, with $Y\simeq\PP^1_k$, branched precisely over
$Z$. Its deck group scheme
\[
 G=\underline{\Aut}_{\PP^1_k}(Y)
\]
is finite \'etale over $k$, with $G(\kbar)=H$. If
$R_j=(\pi^{-1}(Q_j))_{\red}$ and $Y_U=\pi^{-1}(U_\Delta)$, then
\begin{equation}\label{eq:pullback-boundary}
 \pi^*Q_j=e_jR_j,
\end{equation}
and $Y_U\to U_\Delta$ is a left $G$-torsor.
\end{prop}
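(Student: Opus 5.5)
We construct $\pi$ geometrically first and then descend it to $k$. Over $\kbar$, Behrend--Noohi \cite[Proposition~5.5]{BN} gives a Galois cover $\bar\pi:\bar Y\to\PP^1_{\kbar}$ with $\bar Y\simeq\PP^1_{\kbar}$ and group $H$. It is branched exactly over $Z_{\kbar}$, with ramification index $e_j$ above every geometric point of $Q_j$. Concretely, it is the Galois closure of the cover of $\PP^1_{\kbar}\setminus Z_{\kbar}$ attached to the kernel of the orbifold fundamental group $\langle\gamma_i\mid\gamma_i^{\varepsilon_i},\ \prod\gamma_i\rangle\to H$.

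\textbf{Descent of the cover.} The key rigidity is that this kernel is the \emph{universal} orbifold cover: the kernel is characteristic, as the full orbifold fundamental group of $(\PP^1_{\kbar},\Delta_{\kbar})$ is finite and the universal cover is unique up to unique isomorphism over the base once a base point is fixed. Since $\Delta$ is defined over $k$, for each $\sigma\in\Gal(\kbar/k)$ the conjugate $\sigma^*\bar Y\to\PP^1_{\kbar}$ is again a universal orbifold cover of the same orbifold. So $\sigma^*\bar Y\simeq\bar Y$ over $\PP^1_{\kbar}$, and the isomorphism is unique up to the action of $H$.

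To get a genuine descent datum, we pick a base point. Take a rational point $t\in U_\Delta(k)$; it exists since $Z$ is a finite set of closed points. Then fix a geometric point $y_0\in\bar\pi^{-1}(t)$. Because $H$ acts simply transitively on the fibre, requiring that $\phi_\sigma:\sigma^*\bar Y\to\bar Y$ send $\sigma(y_0)$ to $y_0$ pins down $\phi_\sigma$ uniquely. These maps then satisfy the cocycle condition. Galois descent for quasi-projective curves (\emph{e.g.} via the Galois-stable ample class $\bar\pi^*\OO(1)$) produces $\pi:Y\to\PP^1_k$ over $k$, together with a point $y_0\in Y(k)$ above $t$. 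This choice is the "pointed descent".

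\textbf{Identification $Y\simeq\PP^1_k$.} The descended curve $Y$ is a smooth projective geometrically rational curve, i.e.\ a conic, and it has the rational point $y_0$. Hence $Y\simeq\PP^1_k$.

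\textbf{The deck group scheme.} Define $G=\underline{\Aut}_{\PP^1_k}(Y)$. Over $\kbar$ it is the constant group $H$, since $\bar\pi$ is Galois. The functor is represented by a closed subscheme of the finite scheme $\underline{\mathrm{Isom}}$, or equivalently by descending the constant group $H$ with the twisted Galois action $h\mapsto\phi_\sigma\circ\sigma(h)\circ\phi_\sigma^{-1}$. Either way $G$ is finite étale with $G(\kbar)=H$.

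\textbf{Ramification and torsor structure.} The relation $\pi^*Q_j=e_jR_j$ can be checked after base change to $\kbar$. There $H$ permutes the points above each geometric point of $Q_j$ transitively, with cyclic stabilizers of order $e_j$, so every ramification index equals $e_j$. Finally, $\bar\pi$ is étale over $U_\Delta$ and $H$ acts freely and transitively on its fibres. This makes the action map $G\times Y_U\to Y_U\times_{U_\Delta}Y_U$ an isomorphism over $\kbar$, hence over $k$, so $Y_U\to U_\Delta$ is a left $G$-torsor.

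\textbf{Main obstacle.} The hardest step is the descent: we need the isomorphisms $\sigma^*\bar Y\simeq\bar Y$ to exist and to form a cocycle. Existence uses that the cover is universal, so its kernel is characteristic and the cover is stable under every automorphism of the base orbifold. The cocycle condition is exactly what the rational base point $t$ and the choice of $y_0$ provide.
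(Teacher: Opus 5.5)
Your proposal is correct and follows essentially the paper's proof: the same pointed Galois descent of the Behrend--Noohi universal cover. You fix a rational base point in $U_\Delta(k)$, use the uniqueness of the pointed isomorphisms $\phi_\sigma$ to get the cocycle, and deduce $Y\simeq\PP^1_k$ from the descended rational point; $G$ is then the descent of $H$ with the twisted action, and the ramification and torsor claims are checked over $\kbar$, exactly as in the paper. One small slip: the cover corresponds to the kernel of $\pi_1(\PP^1_{\kbar}\setminus Z_{\kbar})\to H$, not of the orbifold fundamental group (which is $H$ itself), and its Galois stability comes from $\Delta$ being defined over $k$ (as you also say), not from finiteness of the group.
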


\begin{proof}
Choose $b\in U_\Delta(k)$ and $y_b\in\bar\pi^{-1}(b)(\kbar)$.
Write $\Gamma_k=\Gal(\kbar/k)$. For $\sigma\in\Gamma_k$, the
conjugate cover is again universal for the same orbifold, since
$\Delta$ is defined over $k$. Uniqueness of the universal cover,
and the simply transitive action of $H$ on the fibre above $b$,
give a unique isomorphism of pointed covers
\[
 \phi_\sigma:{}^\sigma(\bar Y,\bar\pi,y_b)
       \xrightarrow{\ \sim\ }(\bar Y,\bar\pi,y_b).
\]
Consequently $\phi_{\sigma\tau}=\phi_\sigma\circ{}^\sigma\phi_\tau$.
The cover and the point are defined over a finite extension. Taking
a finite Galois extension containing their fields of definition,
uniqueness makes these maps a finite descent datum. Effective descent
yields $\pi:Y\to\PP^1_k$ and $y_b\in Y(k)$. Since $Y$ is smooth
projective of geometric genus zero, $Y\simeq\PP^1_k$.

The induced action on $H$ is
\begin{equation}\label{eq:deck-galois-action}
 \rho_\sigma(h)=\phi_\sigma\circ{}^\sigma h\circ\phi_\sigma^{-1}
 \qquad(h\in H).
\end{equation}
The cocycle identity for $\phi$ shows that $\rho$ is a continuous
$\Gamma_k$-action by group automorphisms. Finite groups with such
action correspond to finite \'etale $k$-group schemes
\cite[Tag~04JI]{Stacks}. Let $G$ correspond to $(H,\rho)$. The
action on $\bar Y$ descends with it. Equivalently, for a $k$-scheme
$T$, one has $G(T)=\Aut_{\PP^1_T}(Y_T)$; this identifies $G$ with
the group scheme in the statement.

Over $U_\Delta$ the cover is finite \'etale. After extension to
$\kbar$, $H$ acts simply transitively on each geometric fibre.
Thus the natural morphism
\begin{equation}\label{eq:deck-torsor}
 G\times_kY_U\longrightarrow Y_U\times_{U_\Delta}Y_U,
       \qquad(g,y)\longmapsto(g\cdot y,y),
\end{equation}
is an isomorphism, which proves the torsor assertion. Finally, the
geometric ramification indices give \eqref{eq:pullback-boundary}.
\end{proof}

In particular, $G(k)=H^{\Gamma_k}$ is the group of deck
transformations defined over $k$, and need not have $|H|$ elements.
The group scheme $G$ is constant exactly when $\rho$ is trivial.
Geometric Galoisness therefore does not assert that $k(Y)/k(t)$ is
Galois.

\subsection{Twists and local specialization}
Let $\sigma_Y$ denote the semilinear Galois action on $Y_{\kbar}$
coming from $Y/k$; then $\rho_\sigma(h)=\sigma_Yh\sigma_Y^{-1}$.
We fix the twisting convention. A continuous $1$-cocycle for $G$
is a family $c_\sigma\in H$ satisfying
\begin{equation}\label{eq:deck-cocycle}
 c_{\sigma\tau}=c_\sigma\rho_\sigma(c_\tau).
\end{equation}
It defines the twist $Y_c$ by the semilinear action
\begin{equation}\label{eq:twisted-action}
 \sigma\star y=c_\sigma\cdot\sigma_Y(y).
\end{equation}
As deck transformations fix the target, the map descends to
$\pi_c:Y_c\to\PP^1_k$. The descended deck group is the inner form
$G_c$, with Galois action on $H$ given by
\begin{equation}\label{eq:twisted-deck-group}
 \rho^c_\sigma(h)=c_\sigma\rho_\sigma(h)c_\sigma^{-1}.
\end{equation}
Thus $(Y_c)_{U_\Delta}\to U_\Delta$ is a $G_c$-torsor, not in
general a torsor under the unchanged $G$; see
\cite[\S2.2]{Santens}. If $G$ is commutative, this inner form is
$G$ itself. 

For $a\in U_\Delta(k)$, choose $y_a\in\pi^{-1}(a)(\kbar)$ and
let $c_\sigma$ be the unique element of $H$ with
\begin{equation}\label{eq:pointed-twist-cocycle}
 c_\sigma\cdot\sigma_Y(y_a)=y_a.
\end{equation}
Freeness on the fibre gives \eqref{eq:deck-cocycle}. The resulting
pointed twist is
\begin{equation}\label{eq:pointed-twist}
 \pi_a:Y_a\to\PP^1_k,\qquad y_a\in Y_a(k),\qquad\pi_a(y_a)=a.
\end{equation}
Since $Y_a$ has geometric genus zero and a rational point,
$Y_a\simeq\PP^1_k$.

Put $R=\bigcup_jR_j$. Let $T\supseteq S$ be a finite set
containing the places dividing $|H|$ such that over $\OO_T$ the
given base model is $\PP^1_{\OO_T}$ and the following data extend:
$\pi$ to a finite morphism with smooth proper model $\sY$;
$G$ to a finite \'etale group scheme $\mathcal G$ acting on $\sY$
over the base; and $Z,R$ to finite \'etale divisors $\sZ,\sR$
with distinct geometric points in each fibre. Require also
\begin{equation}\label{eq:integral-pullback}
 \pi^*\sQ_j=e_j\sR_j.
\end{equation}
These are spreading-out conditions on the original cover, chosen
before any $a$ or twist; $S$ and the local Campana spaces remain fixed.
In particular, geometric deck transformations extend over the integral
closure of each valuation ring at $v\notin T$.

For $v\notin T$, the irreducible components $\sQ_{j,\lambda}$
of $\sZ_{\OO_v}$ are pairwise disjoint and finite \'etale over
$\OO_v$. A section of $\PP^1_{\OO_v}$ can meet at most one of them.
If it meets $\sQ_{j,\lambda}$, its reduction supplies an
$\FF_v$-point of this component. Since $\OO_v$ is henselian,
a connected finite \'etale $\OO_v$-scheme with such a point has
degree one; hence $\sQ_{j,\lambda}\simeq\Spec\OO_v$.
Near that reduction, $\sQ_j$ and $\sQ_{j,\lambda}$ have the same
ideal. Consequently, for every local section, at most one local
boundary intersection multiplicity is positive, and in that case
\begin{equation}\label{eq:line-local-component}
 n_v(\sQ_j,x)=n_v(\sQ_{j,\lambda},x),\qquad
 n_v(\sQ_{h,\mu},x)=0\quad((h,\mu)\ne(j,\lambda)).
\end{equation}
Thus outside $T$ the calculation with $\sQ_j$ also verifies each
of the local-component inequalities in \cite[Definition~2.4]{NS}.
At $T\setminus S$, the original Campana conditions will instead be
preserved by local approximation.

\begin{lem}\label{lem:line-local}
Let $v\notin T$, put $K=k_v$, and let
$\pi_c:Y_c\to\PP^1_K$ be any twist of $\pi_K$. If $q\in Y_c(K)$
and $\pi_c(q)$ reduces into $\sZ$, then there is $r\in Y_c(K)$
with $\pi_c(r)\in Z(K)$. Consequently, if $Y_c(K)\ne\varnothing$,
there is a nonempty open $W\subset Y_c(K)$ such that
$\pi_c(W)\subset\CC_v(\PP^1,D)\cap U_\Delta(K)$.
\end{lem}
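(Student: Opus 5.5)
The plan is to work entirely over $\OO_v$ with the smooth proper model $\sY_c$ of the twist. Since $v\notin T$, the cocycle $c$ restricted to the decomposition group can be unramified or ramified. The first step is to reduce to the unramified case, or to handle both cases by passing to the integral closure. Concretely, we take the finite extension $L/K$ splitting $c$. Over $\OO_L$ the twist becomes $\sY_{\OO_L}$, and $\sY$, $\mathcal G$, $\sR$ extend. Given $q\in Y_c(K)$ with $\pi_c(q)$ reducing to a point of $\sQ_{j,\lambda}\simeq\Spec\OO_v$, we view $q$ as an $L$-point of $\sY$. Its reduction $\bar q$ lies on $\sR_j$, because $\pi$ is finite and $\pi^{-1}(\sZ)_{\red}=\sR$.

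The key step is to lift $\bar q$ to a point of $R_j$ fixed by the twisted Galois action. Since $\sR_j$ is finite \'etale over $\OO_T$, Hensel's lemma identifies the points of $R_j(\kbar)$ with their reductions, compatibly with Galois and with the deck action. Here we use that the geometric deck transformations extend over the integral closure. The twisted action $\sigma\star y=c_\sigma\sigma_Y(y)$ on $\sY(\OO_{\bar K})$ fixes $q$, so it fixes $\bar q$ in the special fibre, acting through the inertia-and-Frobenius action on $\bar{\FF}_v$-points. Uniqueness of the Henselian lift of $\bar q$ to $\sR_j(\OO_{\bar K})$ then shows that the lift $r$ is $\star$-invariant, so $r\in Y_c(K)$ and $\pi_c(r)\in Q_j(K)\subset Z(K)$. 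I expect this step to be the main obstacle. One must check carefully that the $\star$-action extends to the integral points of $\sR$ and commutes with reduction when $c$ is ramified. This is exactly why $T$ contains the places dividing $|H|$ and why the stabilizers (of order $e_j$, prime to $v$) act on the \'etale divisor $\sR_j$ compatibly.

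For the consequence, there are two cases. If some $q\in Y_c(K)$ has $\pi_c(q)$ reducing into $\sZ$, we take $r$ as above. In a local coordinate $u$ at $r$ (a uniformizing function for $R_{j}$ near $r$, integral and \'etale over $\OO_v$) we have $t=\epsilon u^{e_j}$, where $t$ is the local equation of $\sQ_j$ and $\epsilon$ is a unit, by \eqref{eq:integral-pullback}. Points $y$ near $r$ with $\ord_v(u(y))\ge1$ finite and $u(y)\ne0$ give $n_v(\sQ_j,\pi_c(y))=e_j\ord_v u(y)\ge e_j\ge n_i$. By \eqref{eq:line-local-component}, all other local multiplicities vanish. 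The set of such $y$ is a nonempty open set: it is a small punctured ball around $r$ in $Y_c(K)\simeq\PP^1(K)$, with the point $r$ removed. If no $K$-point maps into the reduction of $\sZ$, then every $y\in Y_c(K)\setminus\pi_c^{-1}(Z)$ has all multiplicities zero. We then take $W=Y_c(K)\setminus\pi_c^{-1}(Z(K))$, which is nonempty and open by Lemma~\ref{lem:open}, because $Y_c\simeq\PP^1_K$ once it has a $K$-point. In both cases $\pi_c(W)\subset\CC_v(\PP^1,D)\cap U_\Delta(K)$, using that every $P_i$ is some $Q_j$ with $e_j\ge n_i$.
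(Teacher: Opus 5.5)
Your proposal is correct and is essentially the paper's proof: the equivariant reduction bijection on the finite \'etale divisor $\sR$ and uniqueness of the lift of $\bar q$ give the twisted-invariant point $r$, and a ramification computation at $r$ together with openness gives $W$ (your global case split, in place of the paper's split on one given point, makes no difference). The one point to state carefully is that $t=\epsilon u^{e_j}$ with $\epsilon$ a unit holds on the untwisted model $\sY$, so $u(y)$ lies in the splitting field $L$ and may have fractional valuation when $c$ is ramified; on $Y_c$ itself the leading coefficient need not be a unit, which is why the paper only uses that $\ord_v(h(0))$ is fixed and lets $q'\to r$, and either way the conclusion is unaffected.
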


\begin{proof}
Identify $(Y_c)_{\overline K}$ with $Y_{\overline K}$ using its
cocycle $c$. The point corresponding to $q$ satisfies
$c_\sigma\sigma_Y(y)=y$. Since $K$ is henselian, its valuation
extends uniquely to $\overline K$. The point $y$ is defined over a
finite extension, so properness of the original model gives a
reduction $\bar y$. Its image is the reduction of $\pi_c(q)$,
which lies in $\sZ$; hence $\bar y$ lies in $\sR$ by
\eqref{eq:integral-pullback}.

Finite \'etaleness over the henselian valuation ring gives an
equivariant reduction bijection \cite[Lemma 10.153.7, Tag 04GK]{Stacks}
\begin{equation}\label{eq:etale-reduction}
 R(\overline K)\xrightarrow{\sim}\sR(\overline{\FF}_v).
\end{equation}
This bijection concerns only the finite ramification divisor.
It commutes with Galois and the extended deck action, so reduction
of $c_\sigma\sigma_Y(y)=y$ fixes $\bar y$ under the twisted action. Hence the
unique lift of $\bar y$ is fixed by every $c_\sigma\sigma_Y$ and
descends to $r\in Y_c(K)$ with $\pi_c(r)\in Z(K)$. 

For the last assertion, if the image of a local point reduces
outside $\sZ$, all local boundary intersection multiplicities are zero,
and a sufficiently small neighbourhood gives the required $W$.
Otherwise use $r$ above and put $p=\pi_c(r)\in Q_j(K)$.
The closure $\sQ_{j,\lambda}$ of $p$ is the degree-one component
in \eqref{eq:line-local-component}, even when $Q_j$ is not rational
over $k$. Choose a local parameter $u$ at $r$ and an integral
local equation $z$ for $\sQ_{j,\lambda}$ near its reduction.
Ramification gives
\[
 z\circ\pi_c=u^{e_j}h(u),\qquad h(0)\in K^\times.
\]
For $q'$ sufficiently close to $r$ and off the ramification divisor,
\begin{equation}\label{eq:local-intersection-growth}
 n_v(\sQ_{j,\lambda},\pi_c(q'))
 =e_j\ord_v(u(q'))+\ord_v(h(0))\longrightarrow+\infty.
\end{equation}
All other local boundary intersection multiplicities are zero by
\eqref{eq:line-local-component}. The factor $h(0)$ need not be a
unit; its valuation is fixed. If $Q_j=P_i$ is an original boundary
point, choose the displayed intersection multiplicity at least $n_i$;
an auxiliary boundary point imposes no condition. Such $q'$ exist
because $Y_c$ is a smooth curve with the $K$-point $r$.
Lemma~\ref{lem:open} now gives the required nonempty open $W$.
\end{proof}


\Needspace{5\baselineskip}
\subsection{The global argument}
\begin{proof}[Proof of Theorem~\ref{thm:line}]
The case $D=0$ is ordinary weak approximation. Otherwise we use the
auxiliary divisor $\Delta$ and the cover of
Proposition~\ref{prop:pointed-cover}. Fix
$(x_v)_v\in\prod_v\CC_v(\PP^1,D)$ and finitely many approximation
conditions, keeping $S$ fixed. Choose a finite $T\supseteq S$
containing the prescribed approximation places and those dividing
$|H|$, such that over $\OO_T$ the given base model is $\PP^1_{\OO_T}$,
$\pi$ extends to a finite morphism with smooth proper model $\sY$,
$G$ extends to a finite \'etale group scheme acting on $\sY$ over
the base, and $\sZ,\sR$ are finite \'etale divisors with
$\pi^*\sQ_j=e_j\sR_j$. By Lemma~\ref{lem:open}, choose nonempty
open sets $V_v\subset U_\Delta(k_v)\cap\CC_v(\PP^1,D)$, $v\in T$,
compatible with the prescribed approximation conditions.
Choose $a\in U_\Delta(k)$ in all $V_v$. The pointed twist
$\pi_a:Y_a\to\PP^1$ has a point $q\in Y_a(k)$ above $a$, so
$Y_a\simeq\PP^1_k$.

Choose a finite $T'\supseteq T$ such that $\pi_a$ extends over
$\OO_{T'}$ to a finite morphism of smooth proper models, and the
closures $\sR_{j,a}$ of $(\pi_a^{-1}(Q_j))_{\red}$ satisfy
\begin{equation}\label{eq:twist-integral-pullback}
 \pi_a^*\sQ_j=e_j\sR_{j,a}
\end{equation}
as effective Cartier divisors. At $v\in T$ take a neighbourhood
of $q$ mapping into $V_v$. At $v\in T'\setminus T$,
Lemma~\ref{lem:line-local} (for $q$) gives a nonempty open
set mapping to $\CC_v(\PP^1,D)\cap U_\Delta(k_v)$.
Weak approximation on $Y_a\simeq\PP^1_k$ gives a rational point
$p$ in all these open sets, with $t=\pi_a(p)\in U_\Delta(k)$.

The prescribed approximation conditions and the Campana conditions
at $T'\setminus S$ hold by construction. For $v\notin T'$, properness
and \eqref{eq:twist-integral-pullback} give
\[
 n_v(\sQ_j,t)=e_j n_v(\sR_{j,a},p)\in e_j\ZZ_{\geq0}.
\]
By \eqref{eq:line-local-component}, this is the unique possibly
positive local-component intersection multiplicity over $Q_j$.
If $Q_j=P_i$, every positive value is at least $e_j\geq n_i$.
All other local-component intersection multiplicities vanish. Thus
$t\in\CC_S(\PP^1,D)$ gives the required approximation.
\end{proof}

\section{Hyperplane arrangements}\label{sec:hyperplanes}

Let $N=n+3$ and let $E_i=\{\ell_i=0\}$ be as in
Theorem~\ref{thm:hyperplanes}. Put $U=\PP^n\setminus\bigcup_iE_i$.
Let $A$ be the $N\times(n+1)$ matrix of the forms $\ell_i$, and
choose a basis of their two relations:
\begin{equation}\label{eq:hyperplane-relations}
 \sum_i b_i\ell_i=\sum_i c_i\ell_i=0,\qquad
 B=\begin{pmatrix}b_1&\cdots&b_N\\c_1&\cdots&c_N\end{pmatrix}.
\end{equation}
General position is equivalent to
$\Delta_{ij}:=b_ic_j-c_ib_j\ne0$ for $i\ne j$. 

Let $T\supseteq S$ be a finite set containing the places above
$2$ such that outside $T$ the matrices $A$ and $B$ are integral
with unit maximal minors and the forms define the prescribed
closures $\sE_i$. The
unit-minor condition gives
$\ker(B:\OO_v^{N}\to\OO_v^2)=A\OO_v^{n+1}$, with $AX$ primitive
if and only if $X$ is primitive. Each $E_i$ is geometrically integral, so it has just one local
irreducible component at every place. For $v\notin T$ and primitive
$X\in\OO_v^{n+1}$ off the boundary, the pulled-back ideal is
generated by $\ell_i(X)$; hence its intersection multiplicity in the
sense of \cite[Definitions~2.3--2.4]{NS} is
$n_v(\sE_i,[X])=\ord_v(\ell_i(X))$.

Put
\[
 Z=\bigcup_{i<j}(E_i\cap E_j),\qquad
 \sZ=\bigcup_{i<j}(\sE_i\cap\sE_j),\qquad
 \mathcal W=\PP^n_{\OO_S}\setminus\sZ.
\]
Require also that $\mathcal W(\OO_v)\ne\varnothing$ for every
$v\notin T$. For $a\in U(k)$ and $v\notin T$, the condition
$a\in\mathcal W(\OO_v)$ says precisely that, after choosing a
primitive local representative $\widetilde a$, at most one value
$\ell_i(\widetilde a)$ has positive valuation. Such $a$ exists by strong approximation on $\PP^n\setminus Z$.

For $a\in U(k)$, choose a homogeneous representative
$\widetilde a$ and put $a_i=\ell_i(\widetilde a)$. Define
\begin{equation}\label{eq:hyperplane-cover}
 Y_a:\quad Q_b=\sum_i b_i a_i y_i^2=0,\qquad
 Q_c=\sum_i c_i a_i y_i^2=0\quad\subset\PP^{n+2}_k.
\end{equation}
The linear embedding $\iota(x)=[\ell_1(x):\cdots:\ell_N(x)]$
identifies $\PP^n$ with the subspace defined by \eqref{eq:hyperplane-relations}.

\begin{prop}\label{prop:hyperplane-cover}
The variety $Y_a$ is a smooth geometrically integral intersection
of two quadrics of dimension $n$. There is a finite morphism of
degree $2^{n+2}$,
\[
 \pi_a:Y_a\longrightarrow\PP^n_k,\qquad
 \iota(\pi_a(y))=[a_1y_1^2:\cdots:a_Ny_N^2],
\]
with $q_0=(1:\cdots:1)\in Y_a(k)$ and $\pi_a(q_0)=a$.
The morphism $\pi_a$ does not depend on the choice of homogeneous representative
$\widetilde a$.
\end{prop}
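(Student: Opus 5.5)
The plan is to work directly with the diagonal pencil. Write $Y_a=\{Q_b=Q_c=0\}$ and note that every $a_i\ne0$, because $a\in U(k)$. The pencil spanned by $Q_b$ and $Q_c$ is $\lambda Q_b+\mu Q_c=\sum_i(\lambda b_i+\mu c_i)a_iy_i^2$. For smoothness I would use the standard criterion for a complete intersection of two diagonal quadrics $\sum\alpha_iy_i^2=\sum\beta_iy_i^2=0$ in characteristic zero: it is smooth of codimension two exactly when every $2\times2$ minor $\alpha_i\beta_j-\alpha_j\beta_i$ with $i\ne j$ is nonzero. Here that minor is $a_ia_j\Delta_{ij}\ne0$, by general position and $a\in U(k)$.

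To verify the criterion, let $y$ be a geometric point of $Y_a$ at which the Jacobian has rank $<2$. Then there is $(\lambda,\mu)\ne0$ with $(\lambda b_i+\mu c_i)a_iy_i=0$ for all $i$. Since the vectors $(b_i,c_i)$ are pairwise non-proportional, $\lambda b_i+\mu c_i=0$ for at most one index $i$, so $y_j=0$ for all $j\ne i$. Then $Q_b(y)=b_ia_iy_i^2=0$ and $Q_c(y)=c_ia_iy_i^2=0$. Because $(b_i,c_i)\ne0$ (it is a column of $B$ with nonzero minors), this forces $y_i=0$, a contradiction. So $Y_a$ is smooth of dimension $N-1-2=n$ in $\PP^{N-1}=\PP^{n+2}$. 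A smooth complete intersection of positive dimension is geometrically connected, and hence geometrically integral.

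Next I would define the morphism. Consider the squaring map $s:\PP^{N-1}\to\PP^{N-1}$, $y\mapsto[a_1y_1^2:\cdots:a_Ny_N^2]$. It is finite of degree $2^{N-1}=2^{n+2}$, since it is the composite of a diagonal automorphism with the Frobenius-type map $y_i\mapsto y_i^2$, which has no base points. The equations $Q_b=Q_c=0$ say precisely that $s(y)$ satisfies $\sum b_iz_i=\sum c_iz_i=0$, i.e.\ $s(y)\in\iota(\PP^n)$. Conversely, $s^{-1}(\iota(\PP^n))=Y_a$ scheme-theoretically. Consequently $\pi_a=\iota^{-1}\circ s|_{Y_a}$ is finite, as the base change of a finite morphism along the closed immersion $\iota$. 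Its degree is $2^{n+2}$, since base change preserves the degree of the flat finite map $s$; $s$ is flat because it is finite between regular schemes of the same dimension.

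The point $q_0=(1:\cdots:1)$ lies on $Y_a$ because $\sum b_ia_i=\sum b_i\ell_i(\widetilde a)=0$, and likewise for $c$. Moreover $\iota(\pi_a(q_0))=[a_1:\cdots:a_N]=\iota(a)$. For independence of the representative, replace $\widetilde a$ by $t\widetilde a$ with $t\in k^\times$. Then each $a_i$ is multiplied by $t$, the defining equations of $Y_a$ are scaled by $t$ (so $Y_a$ is unchanged as a subscheme), and $s$ is composed with scaling by $t$, which is the identity on projective space.

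The only mildly delicate step is the smoothness criterion, in particular the role of $(b_i,c_i)\ne0$ and the non-proportionality of the columns of $B$. Everything else is formal.
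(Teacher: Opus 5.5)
Your proof is correct, but it reaches smoothness, geometric integrality and the degree by a different route from the paper.

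The paper argues as follows:
\begin{itemize}
\item Smoothness comes from Reid's criterion: the discriminant $\det(sM_b+tM_c)=(\prod_i a_i)\prod_i(sb_i+tc_i)$ has $N$ distinct roots.
\item For the degree and geometric integrality, the generic fibre of $(\pi_a)_{\kbar}$ is obtained over $\kbar(\PP^n)$ by adjoining square roots of $f_i=a_N\ell_i/(a_i\ell_N)$. These square classes are independent because $\ord_{E_j}(f_i)=\delta_{ij}$, so the generic fibre is integral of degree $2^{N-1}$.
\item Every geometric component of $Y_a$ has dimension $n$ and so dominates $\PP^n$. Hence there is only one component, and smoothness gives reducedness.
\end{itemize}

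You argue differently:
\begin{itemize}
\item You verify the diagonal case of the smoothness criterion directly from the Jacobian, using only $a_i\ne0$, $\Delta_{ij}\ne0$ and $(b_i,c_i)\ne0$.
\item You deduce geometric integrality from the connectedness of positive-dimensional complete intersections together with smoothness.
\item You get the degree from miracle flatness of the squaring map $s$ and the invariance of the rank of $s_*\mathcal O$ under base change along $\iota$, since $Y_a=s^{-1}(\iota(\PP^n))$. The one input you assert rather than prove is that $s$ has degree $2^{N-1}$; this is standard, for example from $s^*H=2H$.
\end{itemize}

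Your route is more self-contained on the smoothness side, since it needs no citation of Reid. It also gives a slightly stronger conclusion: $\pi_a$ is finite locally free of rank $2^{n+2}$, not just of generic degree $2^{n+2}$.

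The paper's route avoids the connectedness theorem for complete intersections. As a by-product it describes $\kbar(Y_a)$ explicitly as a multiquadratic extension of $\kbar(\PP^n)$ with independent radicands.
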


\begin{proof}
Let $M_b=\operatorname{diag}(b_ia_i)_{i=1}^N$ and
$M_c=\operatorname{diag}(c_ia_i)_{i=1}^N$. The discriminant of
the pencil $sQ_b+tQ_c$ is
\[
 \det(sM_b+tM_c)
 =\left(\prod_{i=1}^N a_i\right)
   \prod_{i=1}^N(sb_i+tc_i).
\]
Since $a\in U(k)$, all $a_i$ are nonzero. Moreover,
$\Delta_{ij}\ne0$ for $i\ne j$, so the linear factors
$sb_i+tc_i$ are nonzero and pairwise nonproportional. Thus the
discriminant has $N$ distinct zeros in $\PP^1_{\kbar}$.
By Reid's criterion \cite[Proposition~2.1]{Reid72}, applied over
$\kbar$, $Y_a$ is a smooth complete intersection of two quadrics
of dimension $N-3=n$.

The finite morphism
\[
 F_a:\PP^{N-1}\longrightarrow\PP^{N-1},\qquad
 [y_1:\cdots:y_N]\longmapsto[a_1y_1^2:\cdots:a_Ny_N^2]
\]
has $Y_a=F_a^{-1}(\iota(\PP^n))$, and hence induces the stated
finite morphism $\pi_a$. The generic fibre of $(\pi_a)_{\kbar}$
is obtained over $\kbar(\PP^n)$ by adjoining square roots of
\[
 f_i=\frac{a_N\ell_i}{a_i\ell_N},\qquad 1\leq i<N.
\]
These squareclasses are independent, since
$\ord_{E_j}(f_i)=\delta_{ij}$ for $1\leq i,j<N$. Thus the
generic fibre of $(\pi_a)_{\kbar}$ is integral of degree
$2^{N-1}$.
Every geometric irreducible component of $Y_a$ has dimension
$n$ and therefore dominates $\PP^n$ under the finite morphism
$\pi_a$. Together with smoothness, this proves that $Y_a$ is
geometrically integral and that $\deg(\pi_a)=2^{N-1}=2^{n+2}$.

The relations \eqref{eq:hyperplane-relations} give
$q_0=(1:\cdots:1)\in Y_a(k)$ and $\pi_a(q_0)=a$.
Replacing $\widetilde a$ by $\lambda\widetilde a$ multiplies all
$a_i$ by the same $\lambda\in k^\times$, leaving both $Y_a$
and $F_a$ unchanged. Thus $\pi_a$ is independent of this choice.
\end{proof}

For $n\geq3$, Proposition~\ref{prop:pointed-quadrics-arithmetic}
gives ordinary weak approximation on every $Y_a$. For $n=2$, we
make a suitable choice of $a$.

\begin{prop}\label{prop:hyperplane-choice}
Suppose $n=2$. For any nonempty open sets
$V_v\subset U(k_v)$, $v\in T$, there exists $a\in U(k)$
belonging to all $V_v$ such that $\Br(Y_a)=\Br(k)$ and
$a\in\mathcal W(\OO_v)$ for every $v\notin T$. This $Y_a$
satisfies ordinary weak approximation.
\end{prop}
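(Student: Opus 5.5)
The plan is to choose $a$ by strong approximation on $\PP^2\setminus Z$, adding a few auxiliary places at which we prescribe the valuations of the $\ell_i(\widetilde a)$. These valuations force the Galois action on $\Pic(Y_{a,\kbar})$ to be as large as possible for a diagonal quartic del Pezzo surface, which kills $\Br(Y_a)/\Br(k)$. Here $N=5$, and $Y_a\subset\PP^4$ is the diagonal del Pezzo surface of degree $4$ from \eqref{eq:hyperplane-cover}. By Proposition~\ref{prop:hyperplane-cover} it is smooth and has the rational point $q_0$. So once $\Br(Y_a)=\Br(k)$, Proposition~\ref{prop:pointed-quadrics-arithmetic} gives $\overline{Y_a(k)}=Y_a(\A_k)^{\Br(Y_a)}=Y_a(\A_k)$, which is ordinary weak approximation.

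\emph{Step 1: the Galois module.} The degenerate members of the pencil $sQ_b+tQ_c$ sit at the $k$-rational points $[s:t]=[c_i:-b_i]$. The $i$-th one is a cone over a smooth quadric in $\PP^3$, whose discriminant, up to squares, is
\[
 d_i=\prod_{j\ne i}\Delta_{ij}a_j .
\]
The field of definition of the $16$ lines is generated by square roots of products of the $d_i$ (together with the rulings of these cones). The Galois image in $W(D_5)$ lies in the subgroup $(\ZZ/2)^4$ of even sign changes, acting through the classes of $d_1,\ldots,d_5$ modulo squares; note that $\prod_i d_i$ is a fixed constant times a square. I would then use the standard computation (Kunyavski\u{\i}--Skorobogatov--Tsfasman; compare \cite{VAV}) that $H^1(k,\Pic(Y_{a,\kbar}))=0$ when this image is the full $(\ZZ/2)^4$. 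Hochschild--Serre, together with $\Br(Y_{a,\kbar})=0$ for a rational surface, then gives $\Br(Y_a)=\Br(k)$. It therefore suffices to arrange that the subgroup of $k^\times/k^{\times2}$ generated by $d_1,\ldots,d_5$ has rank $4$.

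\emph{Step 2: auxiliary places.} Choose four finite places $w_1,\ldots,w_4\notin T$ such that $\sE_i(\FF_{w_i})$ contains a point off every $\sE_j$ with $j\ne i$. This holds for all large $w$, because $E_i$ minus finitely many points has many $\FF_w$-points. At $w_i$, let $V_{w_i}$ be the set of $a$ having a primitive representative with $\ord_{w_i}\ell_i(\widetilde a)=1$ and $\ell_j(\widetilde a)$ a unit for $j\ne i$. This set is open and nonempty by Hensel's lemma, and it lies inside $\mathcal W(\OO_{w_i})$. Apply strong approximation on $\PP^2\setminus Z$ with the finite set $T\cup\{w_1,\ldots,w_4\}$ to get $a\in U(k)$ lying in all $V_v$ and with $a\in\mathcal W(\OO_v)$ for all other $v$. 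At $w_i$ the constants $\Delta_{ij}$ are units, so the parity vector $(\ord_{w_i}d_j \bmod 2)_j$ has a $0$ in position $i$ and $1$ elsewhere. The vectors for $i=1,\ldots,4$ are linearly independent in $\FF_2^5$. Hence no nontrivial product of $d_1,\ldots,d_4$ is a square, and the rank is $4$.

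\emph{Step 3: conclusion and main obstacle.} Combining Steps~1 and~2 gives $\Br(Y_a)=\Br(k)$, and weak approximation follows from Proposition~\ref{prop:pointed-quadrics-arithmetic} as explained above. I expect the main obstacle to be Step~1. There one must identify precisely the splitting field of the lines of the diagonal surface, including the square roots that come from the rulings and not only the $d_i$. One must then check that a rank-$4$ span of the $d_i$ forces a Galois image on $\Pic$ with vanishing $H^1$. If the image turns out to be only an index-two subgroup of $(\ZZ/2)^4$, I would add one more auxiliary place making a suitable ruling discriminant nonsquare, and recheck $H^1=0$ using the explicit list in \cite{VAV}.
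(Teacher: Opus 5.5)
Your argument is correct and is essentially the paper's. It uses the same discriminant classes (your $d_i$ are the paper's $\delta_i$), the same four auxiliary places with $\ord_{w_i}\ell_i(\widetilde a)=1$, and the same $\FF_2$-parity argument; where you pass through the Galois action on the $16$ lines and the Kunyavski\u{\i}--Skorobogatov--Tsfasman computation of $H^1$, the paper instead quotes Wittenberg's criterion (independence of $\delta_1,\ldots,\delta_4$ forces $\Br(Y_a)=\Br(k)$), which packages exactly that computation. The worry in your Step~3 is unfounded: the two rulings of the $i$-th cone are defined over $k(\sqrt{d_i})$, so the lines split over $k(\sqrt{d_1},\ldots,\sqrt{d_5})$, the image is the full even sign-change group once the $d_i$ span rank $4$, and no extra auxiliary place is needed.
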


\begin{proof}
The five singular members of the pencil are defined over $k$. The member
$c_iQ_b-b_iQ_c=-\sum_{h\ne i}\Delta_{ih}a_hy_h^2$ has rank four
and discriminant squareclass
\begin{equation}\label{eq:hyperplane-discriminants}
 \delta_i=d_i\prod_{h\ne i}a_h\in k^\times/k^{\times2},\qquad
 d_i=\prod_{h\ne i}\Delta_{ih},\qquad \prod_{i=1}^5\delta_i=1.
\end{equation}
The product relation follows because each $a_h$ occurs four times and
$\prod_i d_i=\prod_{i<h}\Delta_{ih}^2$.
By \cite[Theorem~3.37 and Example~3.41]{Wit07}, it suffices to choose
$a$ so that $\delta_1,\ldots,\delta_4$ are independent: then all five
classes are nontrivial, and their product is their only nontrivial relation.

Choose four distinct finite places $w_1,\ldots,w_4\notin T$ with
sufficiently large residue fields. At $w_j$, impose, using a primitive
local representative of $a$,
\begin{equation}\label{eq:hyperplane-auxiliary-intersections}
 \ord_{w_j}(a_j)=1,\qquad \ord_{w_j}(a_i)=0\quad(i\ne j).
\end{equation}
This defines a nonempty open subset of $U(k_{w_j})$: lift a residue
point of $E_j$ lying on none of the other lines, with intersection multiplicity exactly one along $\sE_j$.
The auxiliary open sets lie in $\mathcal W(\OO_{w_j})$.
Strong approximation on $\PP^2\setminus Z$, applied to these open
sets and the $V_v$, gives $a\in U(k)$ satisfying all the conditions
and integral in $\mathcal W$ outside $T\cup\{w_1,\ldots,w_4\}$.
The prescribed intersection multiplicities at the $w_j$ show that in fact
$a\in\mathcal W(\OO_v)$ for every $v\notin T$.

Since each $d_i$ is a unit at the $w_j$, the valuation parities of
$\delta_1,\ldots,\delta_4$ at these places form the matrix
\[
 M=\begin{pmatrix}
 0&1&1&1\\1&0&1&1\\1&1&0&1\\1&1&1&0
 \end{pmatrix},\qquad M^2=I_4\quad\text{over }\FF_2.
\]
These parities are independent of the choice of homogeneous representative:
rescaling multiplies each discriminant representative by a fourth power.
Thus a square relation $\prod_{i=1}^4\delta_i^{\epsilon_i}=1$
gives $M\epsilon=0$, hence $\epsilon=0$.
Wittenberg's criterion yields $\Br(Y_a)/\Br(k)=0$.
Ordinary weak approximation now follows from
Proposition~\ref{prop:pointed-quadrics-arithmetic}.
\end{proof}

\begin{lem}\label{lem:hyperplane-repair}
Let $a\in U(k)$ and let $v\notin T$ be finite. Suppose that the
reduction of $a$ lies outside $\sZ$. Then there exists
$y_v\in Y_a(k_v)$ with
$\pi_a(y_v)\in U(k_v)\cap\CC_v(\PP^n,D)$.
\end{lem}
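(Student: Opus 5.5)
The plan is to work with a primitive representative $\widetilde a$ of $a$ over $\OO_v$ and to split into cases according to the valuations $\ord_v(a_i)$. Since the reduction of $a$ avoids $\sZ$, at most one $a_j$ has positive valuation. Recall that for $v\notin T$ the intersection multiplicity of a point $[X]$, with $X$ primitive, is $n_v(\sE_i,[X])=\ord_v(\ell_i(X))$. Moreover $\ker B=A\OO_v^{n+1}$ and primitivity is preserved under $X\mapsto AX$.

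\emph{Easy cases.} Suppose first that all $a_i$ are units, or that the unique non-unit $a_j$ has $\ord_v(a_j)\geq2$. Then $a$ itself lies in $U(k_v)\cap\CC_v(\PP^n,D)$, and I take $y_v=q_0$ from Proposition~\ref{prop:hyperplane-cover}.

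\emph{Main case: $\ord_v(a_j)=1$.} Here I look for $y\in Y_a(k_v)$ with $y_j\in\varpi_v\OO_v\setminus\{0\}$ and $(y_i)_{i\neq j}$ primitive. For such $y$, put $X_i=a_iy_i^2$. Then $X$ is primitive, since some $y_i$ with $i\neq j$ is a unit. We have $\ord_v(X_i)\in\{0\}\cup\ZZ_{\geq2}$ for $i\neq j$, and $\ord_v(X_j)\geq3$. Since $BX=0$, we can write $X=A\widetilde x$ with $\widetilde x$ primitive, and $[\widetilde x]=\pi_a(y)$ has all intersection multiplicities in $\{0\}\cup\ZZ_{\geq2}$, provided all $y_i\ne0$.

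To construct $y$, fix $y_j=\varpi_v$ and reduce the equations \eqref{eq:hyperplane-cover} modulo $\varpi_v$. This gives the diagonal pencil
\[
 \bar V:\ \sum_{i\neq j}b_i\bar a_iy_i^2=\sum_{i\neq j}c_i\bar a_iy_i^2=0\subset\PP^{n+1}_{\FF_v}.
\]
Its discriminant is $\prod_{i\ne j}\bar a_i\prod_{i\neq j}(sb_i+tc_i)$. The $\bar a_i$ for $i\ne j$ are units, and the $\Delta_{ih}$ are units outside $T$ because $B$ has unit maximal minors. Hence the discriminant has $n+2$ distinct roots, and $v\nmid2$ since $T$ contains the places above $2$. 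By Reid's criterion $\bar V$ is therefore a smooth complete intersection of dimension $n-1\geq1$.

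\emph{Finding an $\FF_v$-point of $\bar V$.} This is the step I expect to be the main obstacle, because $T$ is fixed and the residue field may be small.
\begin{itemize}
\item For $n\geq3$, Chevalley--Warning applies, since there are $n+2\geq5>4$ variables.
\item For $n=2$, $\bar V$ is a smooth genus-one curve. The Hasse--Weil bound $\#\bar V(\FF_v)\geq q+1-2\sqrt q>0$ gives a point.
\end{itemize}
Every point of $\bar V$ is smooth, so the Jacobian of the two equations in the variables $y_i$ ($i\neq j$) has full rank there. Hensel's lemma then lifts this point to a solution over $\OO_v$ with $y_j=\varpi_v$ and $(y_i)_{i\ne j}$ primitive.

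\emph{Final perturbation.} Finally, I move $y$ slightly in $Y_a(k_v)$ so that no coordinate vanishes. This is possible because $Y_a$ is smooth and geometrically integral, so the coordinate hyperplanes contain no $v$-adic open set. It keeps all valuations in the required ranges, as in Lemma~\ref{lem:open}. The result is $y_v$ with $\pi_a(y_v)\in U(k_v)\cap\CC_v(\PP^n,D)$.
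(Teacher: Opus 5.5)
Your proof is correct, but it reaches the point by a different route from the paper.

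\emph{The paper's route.} It works on the base. With $\ell_j(d)=1$, it replaces $\widetilde a$ by $X=\widetilde a+(\varpi_v^2-1)a_jd$. Then $\ell_j(X)=\varpi_v^2a_j$, while $\ell_i(X)/a_i\in1+\varpi_v\OO_v$ for $i\ne j$. These ratios are squares by Hensel's lemma, so $y_j=\varpi_v$ together with $y_i^2=\ell_i(X)/a_i$ gives a point of $Y_a$ with $\pi_a(y)=[X]$. This handles every positive value of $\ord_v(a_j)$ at once, giving multiplicity $\ord_v(a_j)+2$. It needs only the one-variable Hensel lemma for square roots, and it makes all $y_i$ nonzero automatically.

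\emph{Your route.} You fix $y_j=\varpi_v$ on the cover and solve for the remaining coordinates. This uses smoothness of the reduced pencil (Reid over $\FF_v$), existence of a residue point (Chevalley--Warning, Hasse--Weil), multivariable Hensel, and a final perturbation off the coordinate hyperplanes. All of these steps are valid.

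\emph{The obstacle you flag is free.} Because $q_0\in Y_a$, one has $\sum_{i\ne j}b_ia_i=-b_ja_j\equiv0\pmod{\varpi_v}$, and likewise for the $c_i$. So $(1:\cdots:1)$ is already an $\FF_v$-point of $\bar V$. Lifting it by Hensel gives $y_i\in1+\varpi_v\OO_v$ for $i\ne j$. That removes the need for point counting and for the perturbation, and yields the same kind of point as the paper, viewed from the cover rather than the base. Your separate treatment of $\ord_v(a_j)\ge2$ via $q_0$ is also harmless but unnecessary.

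\emph{What each approach buys.} The paper's argument is shorter and entirely explicit. Yours uses heavier input, but it does not depend on spotting a residue point by hand.
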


\begin{proof}
Put $R=\OO_v$ and $\varpi=\varpi_v$, and rescale
$\widetilde a$ locally to be integral and primitive. If all
$a_i=\ell_i(\widetilde a)$ are units, take $y_v=q_0$.
Otherwise exactly one, say $a_j$, has positive valuation.
Since $\ell_j$ is primitive over $R$, choose $d\in R^{n+1}$ with
$\ell_j(d)=1$ and set
\[
 X=\widetilde a+(\varpi^2-1)a_jd.
\]
Then $X\equiv\widetilde a\pmod\varpi$ is primitive,
$\ell_j(X)=\varpi^2a_j$, and
$\ell_i(X)/a_i\in1+\varpi R$ for $i\ne j$.
These latter ratios are squares by Hensel's lemma. Set
$y_j=\varpi$ and choose $y_i\in1+\varpi R$ with
$y_i^2=\ell_i(X)/a_i$ for $i\ne j$.
The relations among the $\ell_i$ give $y_v\in Y_a(k_v)$ and
$\pi_a(y_v)=[X]$. Consequently
\begin{equation}\label{eq:hyperplane-repaired-intersections}
 n_v(\sE_i,[X])=
 \begin{cases}
 \ord_v(a_j)+2\geq3,&i=j,\\
 0,&i\ne j.
 \end{cases}
\end{equation}
All $\ell_i(X)$ are nonzero, so $[X]\in U(k_v)$, as required.
\end{proof}

\begin{proof}[Proof of Theorem~\ref{thm:hyperplanes}]
Fix $(x_v)_v\in\prod_v\CC_v(\PP^n,D)$ and finitely many approximation
conditions, keeping $S$ fixed. Choose a finite $T\supseteq S$
containing the prescribed approximation places and those above $2$,
such that, for every $v\notin T$, the matrices $A,B$ are integral
with unit maximal minors, the forms $\ell_i$ define the prescribed
closures $\sE_i$, and $\mathcal W(\OO_v)\ne\varnothing$.
By Lemma~\ref{lem:open}, choose nonempty open sets
$V_v\subset U(k_v)\cap\CC_v(\PP^n,D)$, $v\in T$, compatible with
the prescribed approximation conditions. For $n\geq3$, strong
approximation on $\PP^n\setminus Z$ gives $a\in U(k)$ in all $V_v$
with $a\in\mathcal W(\OO_v)$ for every $v\notin T$.
For $n=2$, use Proposition~\ref{prop:hyperplane-choice}, which
also ensures $\Br(Y_a)=\Br(k)$. In both cases, $Y_a$ satisfies
ordinary weak approximation.

Let $T'\supseteq T$ be finite and contain every place where a chosen
global coefficient $a_i$ is not a unit. At $v\in T$, use a
neighbourhood of $q_0$ mapping into $V_v$. At $v\in T'\setminus T$,
Lemma~\ref{lem:hyperplane-repair} and Lemma~\ref{lem:open} give a
nonempty open set whose image lies in $U(k_v)\cap\CC_v(\PP^n,D)$.
This includes the four auxiliary places when $n=2$. Weak
approximation on $Y_a$ gives $y\in Y_a(k)$ in all these open sets;
we may require every $y_i$ to be nonzero.

Put $x=\pi_a(y)$. At $v\notin T'$, take primitive integral
coordinates for $y$. As all $a_i$ are units, $w=(a_i y_i^2)_i$
is primitive and belongs to $\ker B$, so $w=AX$ with $X$ integral
and primitive. Therefore
\begin{equation}\label{eq:hyperplane-good-intersection-parity}
 n_v(\sE_i,x)=\ord_v(a_i y_i^2)
             =2\ord_v(y_i)\in2\ZZ_{\geq0}.
\end{equation}
These are precisely the local-component intersection multiplicities.
The Campana conditions at $T\setminus S$ hold because $x\in V_v$,
and those at $T'\setminus T$ hold by local repair. Thus
$x\in\CC_S(\PP^n,D)$ gives the required approximation.
\end{proof}

\section{Two transverse quadrics}\label{sec:quadric-proof}

Let $C_1,C_2$ be as in Theorem~\ref{thm:quadrics}, with equations
$q_i=0$, and let $U=\PP^n\setminus(C_1\cup C_2)$. For
$a\in U(k)$ choose $\widetilde a=(a_0,\ldots,a_n)$, put
$c_i=q_i(\widetilde a)$, and define
\begin{equation}\label{eq:quadric-cover}
 Y_a:\quad q_1(X_0,\ldots,X_n)=c_1Z_1^2,\qquad
 q_2(X_0,\ldots,X_n)=c_2Z_2^2\quad\subset\PP^{n+2}_k.
\end{equation}
The projection gives $\pi_a:Y_a\to\PP^n_k$ with the rational point
$P_0=[a_0:\cdots:a_n:1:1]$ above $a$.

\begin{prop}\label{prop:quadric-cover}
The variety $Y_a$ is a smooth geometrically integral intersection
of two quadrics of dimension $n$, and $\pi_a$ is finite flat of
degree four.
\end{prop}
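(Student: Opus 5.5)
The plan is to prove smoothness with Reid's criterion, exactly as in Proposition~\ref{prop:hyperplane-cover}. We compute the discriminant of the pencil spanned by the two defining quadrics of $Y_a$. Write $M_1,M_2$ for the Gram matrices of $q_1,q_2$ on $\kbar^{n+1}$. The two quadrics of \eqref{eq:quadric-cover} are $Q_1=q_1-c_1Z_1^2$ and $Q_2=q_2-c_2Z_2^2$, so the pencil $sQ_1+tQ_2$ has block-diagonal Gram matrix $\operatorname{diag}(sM_1+tM_2,\,-sc_1,\,-tc_2)$. Its determinant is
\[
 \det(sQ_1+tQ_2)=c_1c_2\,st\,\det(sM_1+tM_2).
\]
Since $a\in U(k)$, both $c_1$ and $c_2$ are nonzero.

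Next we use the transversality of $C_1$ and $C_2$. Because $C_1\cap C_2$ is smooth of codimension two, Reid's criterion, applied in the converse direction to $C_1\cap C_2\subset\PP^n$, shows that $\det(sM_1+tM_2)$ has $n+1$ distinct roots in $\PP^1_{\kbar}$. Since $C_1$ and $C_2$ are smooth, $\det M_1\ne0$ and $\det M_2\ne0$, so none of these roots is $[1:0]$ or $[0:1]$. Hence the whole determinant has $n+3$ distinct roots, and $Y_a\subset\PP^{n+2}$ is a smooth complete intersection of two quadrics of dimension $n$.

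Finiteness follows because the projection centre $\{X_0=\cdots=X_n=0\}$ does not meet $Y_a$. Indeed, on that locus the equations force $c_1Z_1^2=0$ and $c_2Z_2^2=0$, hence $Z_1=Z_2=0$. So $\pi_a$ is a morphism, and it is finite, being the restriction of a linear projection to a closed subvariety disjoint from the centre. The fibres are cut out by $Z_i^2=q_i(X)/c_i$. This exhibits $Y_a$ as $\operatorname{Spec}$ of the $\OO_{\PP^n}$-algebra
\[
 \OO[Z_1,Z_2]/\bigl(Z_1^2-q_1/c_1,\;Z_2^2-q_2/c_2\bigr),
\]
suitably twisted, that is, with $Z_i$ a section of $\OO(1)$. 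This algebra is locally free of rank four with basis $1,Z_1,Z_2,Z_1Z_2$. Hence $\pi_a$ is flat of degree $4$.

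For geometric integrality we argue as in Proposition~\ref{prop:hyperplane-cover}. Over $\kbar(\PP^n)$ the generic fibre is obtained by adjoining the square roots of $q_1/(c_1\ell^2)$ and $q_2/(c_2\ell^2)$ for a linear form $\ell$. These two squareclasses are independent, because each has odd order along its own $C_i$, and $C_i$ is irreducible as a smooth quadric with $n\geq2$. So the generic fibre is integral of degree $4$. Every component of $Y_a$ has dimension $n$ and so dominates $\PP^n$; together with smoothness, this gives geometric integrality. The main obstacle is the converse direction of Reid's criterion for $C_1\cap C_2$. If necessary, I would prove it directly: a repeated root $[s_0:t_0]$ yields either a singular point of $C_1\cap C_2$ from a vector in $\ker(s_0M_1+t_0M_2)$, or a corank-two member, whose vertex meets $C_1\cap C_2$ in a singular point.
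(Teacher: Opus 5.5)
Your proof is correct, and the finiteness, flatness and geometric-integrality steps are essentially the paper's. In those steps you use that the centre $\{X=0\}$ misses $Y_a$, that the local algebra is free on $1,z_1,z_2,z_1z_2$, and that the square classes of $q_i/\ell^2$ are independent over $\kbar(\PP^n)$. For that last point, state $\ord_{C_j}(q_i/\ell^2)=\delta_{ij}$: this uses $C_1\ne C_2$, so that $q_1q_2/\ell^4$ is also a non-square.

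The genuine difference is the smoothness step. The paper applies the Jacobian criterion directly. A relation $\lambda(\nabla q_1,-2c_1Z_1,0)+\mu(\nabla q_2,0,-2c_2Z_2)=0$ is excluded in three cases:
\begin{itemize}
\item by the $Z$-coordinates when $Z_1Z_2\ne0$;
\item by smoothness of one $C_i$ when exactly one $Z_i$ vanishes;
\item by transversality when both vanish.
\end{itemize}

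You instead compute $\det(sQ_1+tQ_2)=c_1c_2\,st\det(sM_1+tM_2)$ and use Reid's criterion in both directions. The paper only uses the forward direction, for the hyperplane covers. The converse direction, applied to $C_1\cap C_2\subset\PP^n$, is an extra input you must prove or cite. Your sketch of it is sound:
\begin{itemize}
\item For a corank-one member $M$ with kernel $v$ and another member $M'$ with $\det M'\ne0$, one has $\frac{d}{dt}\det(M+tM')|_{t=0}=\kappa\,v^TM'v$ with $\kappa\ne0$. So a repeated root makes $v$ a base point where the gradient of that member vanishes.
\item For corank at least two, the vertex line meets the other quadric at such a point.
\end{itemize}

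The paper's route is shorter and uses the hypotheses exactly as stated, with no discriminant bookkeeping. Yours costs the converse lemma but gives more: the complete list of the $n+3$ singular members of the pencil. These are the two cones at $s=0$ and $t=0$ and the $n+1$ members from the base pencil, all distinct because $\det M_1\det M_2\ne0$. This is the data one would feed into a discriminant-based Brauer computation like that of Proposition~\ref{prop:hyperplane-choice}, and it puts both families of covers under one criterion.
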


\begin{proof}
The $X$-coordinates cannot all vanish. On each affine chart of the
base, adjoining the two roots gives a free algebra with basis
$1,z_1,z_2,z_1z_2$. Hence $\pi_a$ is finite flat of degree 4.
For any linear form $L$, the squareclasses of $q_i/L^2$ over
$\kbar(\PP^n)$ are independent; this gives geometric integrality.
A relation between the gradients has the form
\[
 \lambda(\nabla q_1,-2c_1Z_1,0)
   +\mu(\nabla q_2,0,-2c_2Z_2)=0.
\]
If both $Z_i$ are nonzero, then $\lambda=\mu=0$. If precisely
one vanishes, smoothness of the corresponding quadric gives the
same conclusion. If both vanish, transversality gives independence
of the gradients. Thus $Y_a$ is smooth.
\end{proof}

In dimension two, transversality says that the conics meet in four
distinct geometric points. 
Adjunction
gives $-K_{Y_a}=(n-1)\OO_{Y_a}(1)$, so the surface cover is a
del Pezzo surface of degree 4. The degree of the map $\pi_a$
is 4 in every dimension, whereas the hyperplane cover has degree
$2^{n+2}$; the covers in both cases are smooth complete intersections of two quadrics in their
projective embeddings.

Let $T\supseteq S$ be a finite set containing the places above
$2$ such that outside $T$ the $q_i$ are integral and define the
prescribed closures, the quadrics have smooth reduction, and their
intersection remains transverse. Since $n\geq2$, each smooth
quadric $C_i$ is geometrically integral: a reducible quadratic
form over $\kbar$ would define two hyperplanes (or a double
hyperplane), and hence a singular hypersurface. Thus $C_i$ has
just one local irreducible component at every place. For $v\notin T$
and primitive integral coordinates $X$, the pulled-back ideal of $\sC_i$ is
generated by $q_i(X)$, so
\[
 n_v(\sC_i,[X])=\ord_v(q_i(X))\qquad(v\notin T)
\]
is the intersection multiplicity of \cite[Definitions~2.3--2.4]{NS}.
Put
\[
 Z=C_1\cap C_2,\qquad \sZ=\sC_1\cap\sC_2,\qquad
 \mathcal W=\PP^n_{\OO_S}\setminus\sZ.
\]
Require also that $\mathcal W(\OO_v)\ne\varnothing$ for every
$v\notin T$. Since $Z$ has codimension two, strong
approximation on $\PP^n\setminus Z$ lets us choose $a$ whose
reduction belongs to at most one boundary quadric outside the
finite approximation set.

\begin{lem}\label{lem:quadric-repair}
Let $a\in U(k)$ and let $v\notin T$ be finite. Suppose that the
reduction of $a$ lies outside $\sZ$. Then there exists
$P_v\in Y_a(k_v)$ such that
\[
 \pi_a(P_v)\in U(k_v)\cap\CC_v(\PP^n,D),\qquad
 \alpha(P_v)=\alpha(P_0)\quad(\alpha\in\Br(Y_a)).
\]
\end{lem}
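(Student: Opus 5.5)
We treat two cases according to the reduction of $a$. Rescale $\widetilde a$ to be primitive over $R=\OO_v$. If both $c_1,c_2$ are units, we take $P_v=P_0$. Then $\pi_a(P_0)=a$ and all local intersection multiplicities vanish, so $a\in\CC_v(\PP^n,D)$. Otherwise, since the reduction avoids $\sZ$, exactly one of them, say $c_1$, has positive valuation and $c_2\in R^\times$.

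In the second case we mimic Lemma~\ref{lem:hyperplane-repair}. Because $C_1$ has smooth reduction and $\bar a\in\sC_1\setminus\sC_2$, the gradient $\nabla q_1(\widetilde a)$ is nonzero modulo $\varpi$. Hensel's lemma, applied along a line through $\widetilde a$ transverse to the reduction of $C_1$, then produces $X\equiv\widetilde a\pmod\varpi$ with
\[
 q_1(X)=\varpi^2c_1\,u^2
\]
for a unit $u$. In fact we can simply arrange $q_1(X)=\varpi^2 c_1$, because $q_1$ takes every value in $c_1+\varpi^{m}R$ near $\widetilde a$ once $m>\ord_v(c_1)$ is handled by the nonvanishing gradient. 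Since $X\equiv\widetilde a$, we have $q_2(X)/c_2\in1+\varpi R$, which is a square $z_2^2$ with $z_2\in1+\varpi R$ by Hensel ($v\nmid2$). Setting $z_1=\varpi$ gives
\[
 P_v=[X:\varpi:z_2]\in Y_a(k_v),\qquad
 n_v(\sC_1,[X])=\ord_v(c_1)+2\geq3,\qquad n_v(\sC_2,[X])=0,
\]
so $\pi_a(P_v)\in U(k_v)\cap\CC_v(\PP^n,D)$. We can perturb $X$ slightly so that $q_1(X)\ne0$; this is automatic here.

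The Brauer condition is the main obstacle, and we would attack it with Lemma~\ref{lem:residue}. For $n\geq3$ it is trivial, since $\Br(Y_a)=\Br(k)$ by Proposition~\ref{prop:pointed-quadrics-arithmetic}. For $n=2$, $\Br(Y_a)/\Br(k)$ is killed by $2$, so it suffices to treat $\alpha\in\Br(Y_a)[2]$ modulo constants; constants evaluate equally at $P_0$ and $P_v$. We need a smooth $R$-model $\mathcal M$ containing both $P_0$ and $P_v$ as $R$-points whose reductions lie on a common affine space $A\simeq\Aff^r_\FF$ in the special fibre. The natural model $q_1=c_1Z_1^2$, $q_2=c_2Z_2^2$ over $R$ is not smooth along $Z_1=0$ in the special fibre, because $c_1\equiv0$. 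We would therefore first change variables: with $c_1=\varpi^{e}c_1'$, we use a model adapted to the points. Alternatively, we take $\mathcal M$ to be the open smooth locus of the naive model, containing the points with $Z_1\ne0$ and $Z_2\ne0$. In that locus the special fibre is $\{\bar q_1(X)=0,\ \bar q_2(X)=\bar c_2Z_2^2,\ Z_1\ne0\}$. It is smooth where $\nabla\bar q_1\neq0$, i.e. off the singular locus, which is fine because $\bar C_1$ is smooth.

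The reduction of $P_0$ is $(\bar a,1,1)$. The reduction of $P_v$ is $(\bar a,0,1)$, which has $Z_1=0$ and so does not lie in that chart. We would fix this by instead choosing $z_1=\varpi^{0}$-adjusted data. Concretely, we choose $X$ with $q_1(X)=c_1$ exactly, times a unit square, and use the freedom in the $Z_1$ coordinate: the special fibre over $\bar a$ contains the affine line $\{(\bar a,t,1):t\in\FF\}$ (up to the sign choice of $Z_2$). This line is $A\simeq\Aff^1_\FF$ inside the smooth locus, granted that smoothness there is supplied by $\nabla\bar q_1(\bar a)\ne0$. We then pick the repaired point so that it reduces to $(\bar a,0,1)$, which lies on the same line. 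Lemma~\ref{lem:residue} then gives $\alpha(P_v)=\alpha(P_0)$ for all $\alpha\in\Br(Y_{a,K})[2]$, and hence for all of $\Br(Y_a)$ modulo $\Br(k)$. The delicate check is precisely that this line lies in the smooth locus of a separated finite-type model over $R$ and that both $R$-points extend into it. We expect this to be the crux of the argument.
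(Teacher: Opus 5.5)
Your construction of $P_v$ is the paper's: Hensel along a coordinate $x_r$ with $\partial\bar f_1/\partial x_r(\bar a)\ne0$ to get $q_1(X)=\varpi^2c_1$, then $Z_1=\varpi$ and $Z_2\in1+\varpi R$. Your plan to apply Lemma~\ref{lem:residue} on the line $A=\{x=\bar a,\ z_2=1\}$ is also the paper's. The gap is at the step you call the crux, and your reasoning there goes the wrong way.

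You claim the naive model $f_1=c_1z_1^2$, $f_2=c_2z_2^2$ is not smooth along $z_1=0$ in the special fibre because $c_1\equiv0$. You therefore shrink $\mathcal M$ to $z_1\ne0$, which throws out the reduction $(\bar a,0,1)$ of $P_v$. You then say $A$ lies in the smooth locus only ``granted'' a condition, which contradicts what you said two sentences earlier.

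The diagnosis is backwards. The $z_1$-column of the Jacobian has entries $-2c_1z_1$ and $0$, and it vanishes modulo $\varpi$ at every point of the special fibre. So $z_1=0$ is no worse than $z_1=1$, and smoothness has to come from the other variables. The minor in $(x_r,z_2)$ is $-2c_2z_2\,\partial f_1/\partial x_r$. Along all of $A$ it reduces to $-2\bar c_2\,\partial\bar f_1/\partial x_r(\bar a)\ne0$, using $v\nmid2$ and $c_2\in R^\times$. By the Jacobian criterion, $A$ lies in the relative smooth locus $\mathcal M$ of the naive affine model. Both $P_0=(a,1,1)$ and $P_v=(x,\varpi,z_2)$ are $R$-points of $\mathcal M$ reducing into $A$, with $z_1$ reducing to $1$ and $0$ respectively. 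No change of model is needed.

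Your proposed workaround would actually break the lemma. Taking $q_1(X)=c_1u^2$ makes $Z_1$ a unit, so $n_v(\sC_1,\pi_a(P_v))=\ord_v(c_1)$. This violates the Campana condition when $\ord_v(c_1)=1$, so the repaired point must keep $Z_1=\varpi$.

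You should also justify the passage to $2$-torsion, as the paper does. Put $\beta=\alpha-\alpha(P_0)$, with $\alpha(P_0)\in\Br(k)$. Then $2\beta$ is the image of some $\gamma\in\Br(k)$. Evaluating at the rational point $P_0$ gives $\gamma=0$, so $\beta\in\Br(Y_a)[2]$. Lemma~\ref{lem:residue} then gives $\beta(P_v)=\beta(P_0)=0$.

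With these corrections your argument coincides with the paper's.
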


\begin{proof}
Put $R=\OO_v$ and $\varpi=\varpi_v$. Rescale $\widetilde a$
to be primitive and work in an affine chart with one unit coordinate
equal to $1$. Rescaling by $\lambda$ replaces $c_i$ by
$\lambda^2c_i$; the isomorphism
$[X:Z_1:Z_2]\mapsto[\lambda X:Z_1:Z_2]$ preserves the projection
and the distinguished point. Thus the local equations are
\begin{equation}\label{eq:local-quadric-model}
 f_1(x)=c_1z_1^2,\qquad f_2(x)=c_2z_2^2,\qquad
 c_i=f_i(a)\in R\setminus\{0\},\qquad P_0=(a,1,1).
\end{equation}
If both $c_i$ are units, take $P_v=P_0$. Otherwise, after
interchanging the indices, we may assume $\ord_v(c_1)>0$ and $c_2\in R^\times$.
Smoothness of the reduction of $C_1$ gives a coordinate $x_r$ with
$\partial\bar f_1/\partial x_r(\bar a)\ne0$. Fixing the other
base coordinates, Hensel's lemma gives $x\equiv a\pmod\varpi$
with $f_1(x)=c_1\varpi^2$. Since
$f_2(x)/c_2\in1+\varpi R$, choose $z_2\in1+\varpi R$ with
$z_2^2=f_2(x)/c_2$ and put $P_v=(x,\varpi,z_2)$. Then
\begin{equation}\label{eq:repaired-intersections}
 n_v(\sC_1,\pi_a(P_v))=\ord_v(c_1)+2\geq3,\qquad
 n_v(\sC_2,\pi_a(P_v))=0.
\end{equation}
Both quadratic values are nonzero, so the image belongs to $U(k_v)$.

For $n\geq3$, the Brauer assertion follows from
Proposition~\ref{prop:pointed-quadrics-arithmetic}. For $n=2$, let
$\mathcal M$ be the relative smooth locus of
\eqref{eq:local-quadric-model}. The line
\[
 A=\{x=\bar a,\ z_2=1\}\simeq\Aff^1_{\FF_v}
\]
lies in $\mathcal M_{\FF_v}$: along $A$, the Jacobian minor in
$x_r,z_2$ equals
$-2\bar c_2\,\partial\bar f_1/\partial x_r(\bar a)\ne0$.
Both $P_0$ and $P_v$ are sections of $\mathcal M$ reducing into
$A$, with $z_1$ respectively equal to $1$ and $0$ on reduction.
For $\alpha\in\Br(Y_a)$, the normalized class
$\beta=\alpha-\alpha(P_0)$ is $2$-torsion by
Proposition~\ref{prop:pointed-quadrics-arithmetic} and evaluation
at $P_0$. Restricting $\beta$ to $\mathcal M_{k_v}$ and applying
Lemma~\ref{lem:residue} gives
$\beta(P_v)=\beta(P_0)=0$, which proves the assertion.
\end{proof}

\begin{proof}[Proof of Theorem~\ref{thm:quadrics}]
Fix $(x_v)_v\in\prod_v\CC_v(\PP^n,D)$ and finitely many approximation
conditions, keeping $S$ fixed. Choose a finite $T\supseteq S$
containing the prescribed approximation places and those above $2$,
such that, for every $v\notin T$, the forms $q_i$ are integral and
define the prescribed closures $\sC_i$, the reductions of $C_1,C_2$
are smooth and meet transversely, and $\mathcal W(\OO_v)\ne\varnothing$.
By Lemma~\ref{lem:open}, choose nonempty open sets
$V_v\subset U(k_v)\cap\CC_v(\PP^n,D)$, $v\in T$, compatible with
the prescribed approximation conditions. Strong approximation on
$\PP^n\setminus Z$ gives $a\in U(k)$ belonging to all $V_v$ and
satisfying $a\in\mathcal W(\OO_v)$ for every $v\notin T$.
Construct $Y_a$ and $P_0$, and choose a finite $T'\supseteq T$
containing the places where either global coefficient $c_i$ is not a unit.

For $v\in T'\setminus T$, choose $P_v$ by
Lemma~\ref{lem:quadric-repair}. Put $Q_v=P_v$ at these places and
$Q_v=P_0$ elsewhere. The lemma and reciprocity give
\[
 \sum_v\inv_v\alpha(Q_v)=\sum_v\inv_v\alpha(P_0)=0
 \qquad(\alpha\in\Br(Y_a)).
\]
Here $\alpha(P_0)\in\Br(k)$ is one fixed global class, and each
local modification preserves its evaluation. In particular, we do
not assume that the surface has trivial Brauer group.
Thus $(Q_v)_v\in Y_a(\A_k)^{\Br(Y_a)}$, and
Proposition~\ref{prop:pointed-quadrics-arithmetic} gives
$P\in Y_a(k)$ sufficiently close to $Q_v$ for every $v\in T'$.
Require $\pi_a(P)\in V_v$ at $T$, and a Campana image at
$T'\setminus T$. By openness, we may also require $Z_1Z_2\ne0$.

Set $x=\pi_a(P)$. At $v\notin T'$, take primitive integral
coordinates $[X_0:\cdots:X_n:Z_1:Z_2]$ for $P$. The vector $X$
is primitive: otherwise $q_i(X)\in\mathfrak m_v^2$, and, since
$c_i$ are units, the equations would force both $Z_i$ into
$\mathfrak m_v$, contradicting primitivity of the full coordinate
vector. Hence
\begin{equation}\label{eq:good-intersection-parity}
 n_v(\sC_i,x)=\ord_v(q_i(X))
             =2\ord_v(Z_i)\in2\ZZ_{\geq0}.
\end{equation}
These are the local-component intersection multiplicities.
The Campana conditions at $T\setminus S$ hold because $x\in V_v$,
and those at $T'\setminus T$ hold by local repair. Thus
$x\in\CC_S(\PP^n,D)$ gives the required approximation.
\end{proof}

\end{document}